\newtheorem{theorem}{Theorem}
\newtheorem{LE}[theorem]{Lemma}
\newtheorem{OB}{\bf Observation}
\newtheorem*{DEF}{Definition}
\newcounter{claim_nb}[theorem]
\newtheorem{claim}[claim_nb]{Claim}
\newtheorem*{claim*}{Claim}
\newcommand{\zP}{\mathcal P}
\newcommand{\zQ}{\mathcal Q}
\newcommand{\zX}{\mathcal X}
\newcommand{\zY}{\mathcal Y}
\newcommand{\ignore}[1]{}
\newenvironment{cproof}
{\begin{proof}
 [Proof.]
 \vspace{-1.5\parsep}
}
{ \end{proof}}
\begin{document}
\title{The structure of graphs not admitting a fixed immersion}
\author{
Paul Wollan 
\thanks{
Department of Computer Science, University of Rome, ``La Sapienza", Rome, Italy \texttt{wollan@di.uniroma1.it}. 
Supported by the European Research Council under the European Union's Seventh Framework Programme (FP7/2007-2013)/ERC Grant Agreement no. 279558.}
\\
}
\maketitle
%

\begin{abstract}
We present an easy structure theorem for graphs which do not admit an immersion of the complete graph $K_t$.  The theorem motivates the definition of a variation of tree decompositions based on edge cuts instead of vertex cuts which we call tree-cut decompositions.  We give a definition for the width of tree-cut decompositions, and using this definition along with the structure theorem for excluded clique immersions, we prove that every graph either has bounded tree-cut width or admits an immersion of a large wall.
\end{abstract}
%
\section{Introduction}

The graphs we consider in this article may have multiple edges but no loops.  In this article, we consider the immersion containment relation on graphs.  
\begin{DEF}
A graph $G$ admits a \emph{weak immersion} of a graph $H$ if there exist functions $\pi_v: V(H) \rightarrow V(G)$ and $\pi_e$ mapping the edges of $H$ to subgraphs of $G$ satisfying the following:
\begin{itemize}
\item[a.] the map $\pi_v$ is an injection;
\item[b.] for every edge $f \in E(H)$ with endpoints $x$ and $y$, $\pi_e(f)$ is a path with endpoints equal to $\pi_v(x)$ and $\pi_v(y)$;
\item[c.] for edges $f, f' \in E(H)$, $f \neq f'$, $\pi_e(f)$ and $\pi_e(f')$ have no edge in common.
\end{itemize}
We say that $G$ admits a \emph{strong immersion} of $H$ if the following condition holds as well.
\begin{itemize}
\item[d.] For every edge $f \in E(H)$ with endpoints $x$ and $y$, the path $\pi_e(f)$ intersects the set $\pi_v(V(H))$ only in its endpoints.  
\end{itemize}
The vertices $\{\pi_v(x): x \in V(H)\}$ are the \emph{branch vertices} of the immersion.  We will also say that $G$ immerses $H$ or alternatively that $G$ contains $H$ as an immersion.  The edge-disjoint paths $\pi_e(f)$ for $f\in E(H)$ are the \emph{composite paths} of the immersion.
\end{DEF}
We will focus almost exclusively in this article on weak immersions.  In the interest of brevity, we will often refer to weak immersions simply as immersions.  Whenever we do consider strong immersions as well, we will always explicitly specify so.

Containment as an immersion is closely related to containment as a subdivision.  Recall that to \emph{suppress} a vertex $v$ of degree one or two in a graph $G$, we contract an edge $e$ incident with $v$ and delete any resulting loops.  The graph $G$ \emph{contains $H$ as a subdivision} if $H$ can be obtained from a subgraph of $G$ by repeatedly suppressing vertices of degree two.  Equivalently, $G$ contains $H$ as a subdivision if $G$ admits an immersion $(\pi_v, \pi_e)$ of $H$ such that for every pair of edges $f, f' \in E(H)$, the paths $\pi_e(f)$ and $\pi_e(f')$ are internally vertex-disjoint.  

We can alternately define weak immersions as follows.  Let $e_1$ and $e_2$ be edges in a graph $G$ such that the endpoints of $e_1$ are $x, y$ and the ends of $e_2$ are $y, z$ with $x$ and $z$ distinct.  To \emph{split off} the edges $e_1$ and $e_2$, we delete the edges $e_1$ and $e_2$ from $G$ and add a new edge $e$ with endpoints $z$ and $x$.  Then $G$ contains $H$ as a weak immersion if and only if $H$ can be obtained from a subgraph of $G$ by repeatedly splitting off pairs of edges and suppressing vertices of degree two.

We prove two structural results in this article.  First, we present an easy structure theorem for graphs excluding the complete graph $K_t$ as an immersion for fixed values of $t$.  The proof is quite short and seems to have been independently discovered before.   A qualitative version of this theorem was shown by Seymour at the 2003 PIMS ``Workshop on Structural Graph Theory" in Vancouver, but was never published.  Recently, DeVos, McDonald, Mohar, and Scheide have proven the structure theorem \cite{DeVos} with essentially the same bounds which we obtain here.  

The structural result for excluding a clique immersion gives rise to a natural decomposition similar to tree decompositions based on edge cuts instead of vertex cuts.  We call these decompositions tree-cut decompositions and give a definition for the width of a tree-cut decomposition.  The main result of this article is to show an analog for the grid minor theorem for these tree-cut decompositions.  We show that if a graph has sufficiently large tree-cut width, then it admits an immersion of an \emph{$r$-wall}, a graph similar to the $r \times r$-grid.  

The study of graph immersions has recently seen a flurry of attention.  Robertson and Seymour showed \cite{RS23} that graphs are well-quasi-ordered under weak immersion containment, confirming a conjecture of Nash-Williams \cite{N}.  DeVos et al.~\cite{DDFMMS} have calculated the correct (up to a multiplicative constant) extremal function for the number of edges forcing a clique immersion in simple graphs.  Ferrara et al.~\cite{FGTW} have instead calculated tight minimal degree conditions which suffice to ensure that a graph contains a fixed graph $H$ as an immersion.  In an alternate line of inquiry, researchers have looked at the relationship between the chromatic number of the graph and the presence of clique immersions.  Abu-Khzam and Langston \cite{AL} have modified the infamous Hadwiger's conjecture on the relationship between the chromatic number and the largest clique minor in a graph by conjecturing that every graph of chromatic number $t$ must admit an immersion of $K_t$.  The conjecture has been verified for small values of $t$ by \cite{LM} and independently by DeVos et al.~\cite{DKMO}.  Kawarabayashi and Kobayashi have shown good approximation bounds on coloring problems by excluding an immersion of a fixed clique in \cite{KK}.  Finally, recent work has considered exact characterizations of graphs which do not admit as an immersion small fixed graphs such as $K_{3,3}$ or $K_5$ \cite{D, KT}.

We conclude the section fixing some notation.  Let $G$ be a graph and $v \in V(G)$.  The degree $\deg(v)$ is the number of edges incident with $v$ and $\Delta(G)$ is the maximum degree of a vertex in $G$.  Let $X \subseteq V(G)$.  The set of edges with exactly one endpoint in $X$ is denoted $\delta(X)$.  We will use $\delta(v)$ for $\delta(\{v\})$.  For a subset $X$ of vertices, we refer to the graph induced on $X$ by $G[X]$.   We use $G-X$ to refer to the graph induced on $V(G) - X$.  For a subset $F \subseteq E(G)$ of edges, we use $G-F$ to refer to the graph $(V(G), E(G) \setminus F)$.  For subgraphs $G_1$ and $G_2$ of $G$, the subgraph $G_1 \cup G_2$ has vertex set $V(G_1) \cup V(G_2)$ and edge set $E(G_1) \cup E(G_2)$.  We will use $G-x$ as shorthand notation for $G-\{x\}$ when $x$ is a single element of either $V(G)$ or $E(G)$.  Finally, we will often want to reduce $G$ to a smaller graph by identifying a subset of vertices to a single vertex.  Let $X \subseteq V(G)$, define $G'$ be the graph obtained by deleting every edge with both endpoints in $X$ and identifying the vertex set $X$ to a single vertex.  We will say that $G'$ is obtained from $G$ by \emph{consolidating} $X$.


\section{Weak immersions and connectivity}

Immersions are closely related to general edge connectivity.  Consider the following example.  Define the graph $S_{l, n}$ to be the graph with $n+1$ vertices $x_1, \dots, x_n, y$ and $l$ parallel edges from $x_i$ to $y$ for all $1 \le i \le n$.  
\begin{figure}[htb]\label{fig1}
\begin{center}
\includegraphics[scale = .5]{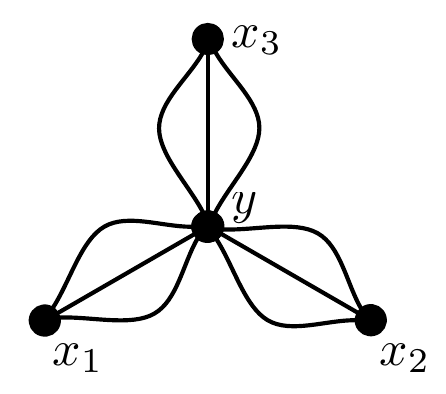}
 \caption{The graph $S_{3,3}$.}
 \end{center}
\end{figure}
See Figure \ref{fig1}.  The graphs $S_{k, n}$ have the property that they contain every fixed graph $H$ as an immersion for sufficiently large $k$ and $n$.  We formalize this in the next claim.

\begin{OB}\label{ob:multistar}
Let $H$ be a graph of maximum degree at most $k$ on $n$ vertices for positive integers $k$ and $n$.  Then the graph $S_{k, n}$ admits $H$ as an immersion. 
\end{OB}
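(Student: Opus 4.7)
The plan is to map each vertex of $H$ to one of the leaves $x_i$ of $S_{k,n}$ and then route every edge of $H$ through the center vertex $y$, using the parallel edges at each leaf as the scarce resource.

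First I would fix an arbitrary bijection from $V(H)$ to $\{x_1,\dots,x_n\}$ and take $\pi_v$ to be this bijection (it is automatically an injection). For the edge map, the idea is that each edge $f = uv$ of $H$ must be realised by a composite path of the form $x_i \!-\! y \!-\! x_j$, where $x_i = \pi_v(u)$ and $x_j = \pi_v(v)$. Such a path consumes exactly one of the $k$ parallel edges in the bundle joining $x_i$ to $y$, and one of the $k$ parallel edges in the bundle joining $x_j$ to $y$. So the whole construction reduces to assigning, for each vertex $v \in V(H)$, an injection from the edges of $H$ incident with $v$ to the $k$ parallel edges between $\pi_v(v)$ and $y$ in $S_{k,n}$.

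Since $\deg_H(v) \leq k$ for every $v \in V(H)$, such an injection exists at each leaf, and I would choose one arbitrarily for each vertex independently. For each edge $f = uv$ of $H$, I then define $\pi_e(f)$ to be the two-edge path consisting of the parallel edge assigned to $f$ at $\pi_v(u)$ followed by the parallel edge assigned to $f$ at $\pi_v(v)$. This satisfies the endpoint condition (b) in the definition of weak immersion.

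The only thing left to check is edge-disjointness (condition (c)). Any two distinct composite paths $\pi_e(f)$ and $\pi_e(f')$ can share an edge only within the bundle of parallel edges at some common leaf $x_i$, meaning both $f$ and $f'$ are incident with $\pi_v^{-1}(x_i)$ in $H$; but then the injectivity of the assignment at that vertex forces them to use distinct parallel edges of the bundle. Hence no edge of $S_{k,n}$ is used by two different composite paths, and $(\pi_v,\pi_e)$ is the desired immersion of $H$ in $S_{k,n}$. There is no real obstacle here beyond bookkeeping; the content of the observation is simply that local edge multiplicity at each leaf is exactly what bounds the maximum degree that can be routed.
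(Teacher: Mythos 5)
Your proof is correct and is essentially the paper's argument read in the other direction: the paper subdivides every edge of $H$ and identifies all the new degree-two vertices into the single hub $y$, which is exactly your routing of each edge of $H$ as a two-edge path through $y$, with the degree bound $\deg_H(v)\le k$ guaranteeing enough parallel edges in each bundle. You simply verify the immersion axioms $(\pi_v,\pi_e)$ directly rather than invoking the splitting-off characterization, but the construction is the same.
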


The observation can be seen as follows.  Given the graph $H$, subdivide each edge of $H$, and identify all the new vertices of degree two to a single vertex $x$.  In the resulting graph, each vertex $v$ in $V(H)$ has exactly $\deg_H(v)$ parallel edges connecting it to $x$.  Thus, the resulting graph is a subgraph of $S_{k, n}$ for $n = |V(H)|$ and $k$ equal to the maximum degree of $H$.  Reversing this process shows how to arrive at $H$ by repeatedly splitting off edges. 

A consequence of Observation \ref{ob:multistar} is that for an arbitrary $H$, if a graph $G$ has sufficiently many vertices which are pairwise sufficiently edge connected, then $G$ admits an immersion of $H$.
\begin{LE}\label{lem:menger}
Let $t \ge 1$ be a positive integer, and $G$ a graph.  Assume there exists a subset $X \subseteq V(G)$, $|X| = t+1$, such that for every pair of vertices in $x, y \in X$, there does not exist an edge cut of order less than $t^2$ separating $x$ from $y$.  Then $G$ admits an immersion of $K_t$.
\end{LE}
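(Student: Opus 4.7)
The plan is to reduce the problem to finding an immersion of the multistar $S_{t-1,t}$ in $G$, and then to invoke Observation~\ref{ob:multistar}: since $K_t$ has $t$ vertices and maximum degree $t-1$, the observation says that $S_{t-1,t}$ immerses $K_t$, and weak immersion containment is transitive (concatenate composite paths, then shortcut the result to a simple path, which preserves edge-disjointness across different edges). So an immersion of $S_{t-1,t}$ in $G$ will deliver an immersion of $K_t$ in $G$.

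To immerse $S_{t-1,t}$ in $G$, I will designate one vertex $y \in X$ as the ``hub'' of the multistar and treat the remaining $t$ vertices $x_1, \ldots, x_t$ of $X$ as the leaves. What is required is a family of $t(t-1)$ pairwise edge-disjoint paths in $G$, partitioned so that for each $i$ exactly $t-1$ of them join $x_i$ to $y$. I will extract this family from a single-commodity max-flow construction: introduce a super-source $s$ joined to each $x_i$ by an edge of capacity $t-1$, give every edge of $G$ capacity $1$, and seek an integer $s$--$y$ flow of value $t(t-1)$. A standard integer flow decomposition then produces the desired paths; each begins with an edge $sx_i$ and continues in $G$ to $y$, and because the edges of $G$ carry unit capacity these $x_i$--$y$ paths are automatically pairwise edge-disjoint in $G$.

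By the max-flow--min-cut theorem, the required flow exists if and only if every $s$--$y$ cut has capacity at least $t(t-1)$. Any such cut is determined by the set $A \subseteq V(G) \setminus \{y\}$ of $V(G)$-vertices on the source side together with $T' = A \cap \{x_1,\ldots,x_t\}$, and has capacity $|\delta(A)| + (t - |T'|)(t-1)$. So the required inequality simplifies to $|\delta(A)| \ge (t-1)|T'|$. For $T' = \emptyset$ this is trivial; for $T' \neq \emptyset$, fix any $x_i \in T'$, observe that $A$ separates $x_i$ from $y$, and invoke the hypothesis to obtain $|\delta(A)| \ge t^2 \ge t(t-1) \ge (t-1)|T'|$. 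The only real obstacle is spotting this framing; after that, the computation is one line. The hypothesis $|X| = t+1$ is precisely calibrated to furnish one hub in addition to the $t$ branch vertices of $K_t$, and the connectivity bound $t^2$ is exactly what is needed to absorb the worst case $|T'| = t$ in the cut inequality.
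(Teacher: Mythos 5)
Your proof is correct and follows essentially the same route as the paper: reduce to immersing a multistar via Observation~\ref{ob:multistar}, attach an auxiliary source vertex to $x_1,\dots,x_t$ with the appropriate edge multiplicity, and apply max-flow--min-cut (the paper uses Menger for edge-disjoint paths with $t$ parallel edges per $x_i$, targeting $S_{t,t}$ rather than $S_{t-1,t}$, but the cut analysis is the same). No gaps.
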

\begin{proof}
By Observation \ref{ob:multistar}, we see that it suffices to find an immersion of $S_{t, t}$.  Label the vertices of $X$ as $x_1, \dots, x_t, y$.  We construct an auxiliary graph $G'$ obtained by adding a vertex $v$ with $t$ parallel edges connecting $v$ to $x_i$ for all $1 \le i \le t$.  If there exist $t^2$ edge-disjoint paths in $G'$ from $v$ to $y$, then by deleting the vertices $v$ from each path, we see that $G$ admits an immersion of $S_{t,t}$.  However, if there do not exist such paths in $G'$, then there exists a partition of $(X, Y)$ of $V(G')$ such that $v \in X$, $y \in Y$, and $\delta_{G'}(X) < t^2$.  Since $v$ has degree $t^2$, the set $X \setminus \{v\}$ contains at least one vertex $x_i$, and consequently, $(X\setminus\{v\}, Y)$ gives an edge cut in $G$ of order less than $t^2$ separating $x_i$ and $y$, contrary to our assumptions.
\end{proof}


\section{Edge sums}

Consider a graph $G$ which does not admit $K_t$ as an immersion for some fixed value $t$.  Lemma \ref{lem:menger} implies that if there are a large number of vertices in $G$ of degree at least $t^2$, then some pair of them must be separated by a bounded size edge cut.  This motivates the definition of a way to decompose a graph on edge cuts much in the same way that clique sums allow one to decompose a graph on vertex cuts.  We will refer to this operation as an edge sum.

\begin{DEF}
Let $G$, $G_1$, and $G_2$ be graphs.  Let $k \ge 1$ be a positive integer.  The graph $G$ is a \emph{$k$-edge sum} of $G_1$ and $G_2$ if the following holds.  There exist vertices $v_i \in V(G_i)$ such that $\deg(v_i) = k$ for $i = 1, 2$ and a bijection $\pi: \delta(v_1) \rightarrow \delta(v_2)$ such that $G$ is obtained from $(G_1 - v_1) \cup (G_2 - v_2)$ by adding an edge from $x \in V(G_1) - v_1$ to $y\in V(G_2) - v_2$ for every pair $e_1, e_2$ of edges satisfying $e_i \in \delta(v_i)$ for $i = 1, 2$, the ends of $e_1$ are $x$ and $v_1$, the ends of $e_2$ are $y$ and $v_2$, and $e_2 = \pi(e_1)$.

We will also refer to a $k$-edge sum as an edge sum of \emph{order} $k$.  The edge sum is \emph{grounded} if  there exist vertices $v_1'$ and $v_2'$ in $G_1$ and $G_2$, respectively, such that for $i = 1, 2$, $v_i' \neq v_i$ and there exist $k$ edge-disjoint paths linking $v_i$ and $v_i'$.  If $G$ can be obtained by a $k$-edge sum of $G_1$ and $G_2$, we write $G = G_1 \hat{\oplus}_k G_2$.
\end{DEF}

We first see that the operation of taking edge sums preserves the property of immersing a clique when the clique is larger than the order of the edge sum.

\begin{LE}\label{lem:sum2}
Let $G$, $G_1$, and $G_2$ be graphs and let $k, t \ge 1$ be positive integers with $t > k$.  Assume $G = G_1 \hat{\oplus}_k G_2$.  If $G$ admits an immersion of $K_t$, then either $G_1$ or $G_2$ does as well.  
\end{LE}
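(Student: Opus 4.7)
Given the $K_t$-immersion $(\pi_v, \pi_e)$ in $G$, the plan is to show that all branch vertices but possibly one can be assumed to lie on the same side of the edge sum, and then to reroute every composite path into that side, using the identified vertex ($v_1$ or $v_2$) as both a rerouting hub and, if necessary, a replacement branch vertex. Let $B = \pi_v(V(K_t))$ and partition $B = B_1 \cup B_2$ according to whether each branch vertex lies in $V(G_1)\setminus\{v_1\}$ or $V(G_2)\setminus\{v_2\}$, with $t_i = |B_i|$ and $t_1 + t_2 = t$.

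The first key step is a short counting argument on the $k$ bridging edges, which form an edge cut separating the two sides of the sum. Each of the $t_1 t_2$ composite paths joining $B_1$ to $B_2$ must cross this cut an odd number of times, and uses distinct bridging edges since the composite paths are pairwise edge-disjoint. Hence $t_1 t_2 \le k < t_1 + t_2$, which rearranges to $(t_1-1)(t_2-1) < 1$; so $\min\{t_1, t_2\} \le 1$. I would assume without loss of generality that $t_2 \le 1$ and construct the immersion in $G_1$.

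For the construction, I would trace each composite path $P = \pi_e(f)$ and decompose it into its maximal subwalks lying in $V(G_2)\setminus\{v_2\}$. Each such subwalk is either an interior \emph{excursion} entered and exited through bridging edges, or (when $t_2 = 1$) a terminal subwalk ending at the unique vertex of $B_2$. Recall that the bijection $\pi$ identifies each bridging edge of $G$ with a $v_1$-incident edge of $G_1$. I would replace every interior excursion $x \to y \to \cdots \to y' \to x'$ by the two-edge detour $x \to v_1 \to x'$ in $G_1$ using the corresponding $v_1$-incident edges, and, if $t_2 = 1$, redirect the final tail of each path ending in $B_2$ so that it terminates at $v_1$ via the corresponding $v_1$-incident edge. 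The new branch set is $B_1$ if $t_2 = 0$ and $B_1 \cup \{v_1\}$ if $t_2 = 1$, of size $t$ in either case.

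The main obstacle, and the place to be careful in the write-up, is the edge-disjointness check. Each bridging edge of $G$ appears in at most one composite path of the original immersion and corresponds under $\pi$ to a unique $v_1$-incident edge of $G_1$, so the $v_1$-incident edges used during the rerouting are all distinct; the remaining edges of each rerouted walk lie in $G_1 - v_1$ and were already edge-disjoint in the original immersion. Since rerouting may create walks rather than simple paths in $G_1$, I would finish by shortcutting each walk to a subpath with the same endpoints, which preserves edge-disjointness across the family and yields the desired $K_t$-immersion in $G_1$.
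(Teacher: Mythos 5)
Your proposal is correct and follows essentially the same route as the paper: a counting argument on the $k$-edge cut shows at most one branch vertex lies on one side, and then the composite paths are restricted to $G_1$ with $v_1$ serving as the rerouting hub and, when needed, the replacement branch vertex. You simply spell out more explicitly the inequality $t_1 t_2 \le k < t_1+t_2$ and the walk-to-path cleanup that the paper leaves implicit.
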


\begin{proof}
Let $Z$ be the set of branch vertices of an immersion of $K_t$ in $G$.  Let $X_i = V(G_i) \cap V(G)$ for $i = 1, 2$.  Observe that $|Z \cap X_i| \le 1$ for one of $i = 1, 2$ by the fact that that $\delta_G(X_i) = k < t$.  Thus, we may assume that all but one vertex of $Z$ is contained in $X_1$.  It follows that $G_1$ admits an immersion of $K_t$.  To see this, restrict the composite paths of the immersion to the edge set of $G_1$ and let the vertex of $V(G_1) \setminus X_1$ be a branch vertex in the case when $X_2$ contains a single vertex of $Z$.   Note that it is possible that the original immersion is strong, but the immersion we find in $G_1$ is weak, specifically, if $Z$ contains a vertex of $X_2$ and several of the composite paths of the immersion intersect $G[X_2]$ as well.  \end{proof}

We now see when the converse holds.  If a graph is an edge sum of two smaller graphs, and if the edge sum is grounded, then immersions in one of the smaller graphs extend readily to immersions in the larger graph.  We omit the proof.  

\begin{LE}\label{lem:sum}
Let $G$, $G_1$, and $G_2$ be graphs and let $k \ge 1$ be a positive integer.  Assume $G = G_1 \hat{\oplus}_k G_2$, and assume that the edge sum is grounded.  Let $H$ be an arbitrary graph.  If $G_1$ or $G_2$ admits an immersion of $H$, then $G$ does as well.  If the immersion in either $G_1$ or $G_2$ is strong, then the immersion in $G$ is also strong.  
\end{LE}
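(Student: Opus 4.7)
By the symmetry of the statement I may assume $G_1$ admits the immersion $(\pi_v,\pi_e)$ of $H$, and I will explicitly construct an immersion $(\pi_v',\pi_e')$ of $H$ in $G$ by rerouting, through $G_2$, those pieces of the composite paths that touch $v_1$. The groundedness hypothesis supplies $k$ pairwise edge-disjoint paths $P_1,\dots,P_k$ from $v_2$ to $v_2'$ in $G_2$; I enumerate $\delta(v_1)=\{e_1,\dots,e_k\}$ and index the $P_i$ so that $P_i$ begins with the edge $\pi(e_i)\in\delta(v_2)$. Let $x_i$ denote the end of $e_i$ distinct from $v_1$, let $y_i$ denote the end of $\pi(e_i)$ distinct from $v_2$, and write $b_i$ for the bridge edge of $G$ joining $x_i$ and $y_i$ that arises from the pair $(e_i,\pi(e_i))$.

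\textbf{Construction.} Define $\pi_v'(a):=\pi_v(a)$ whenever $\pi_v(a)\neq v_1$, and set $\pi_v'(a):=v_2'$ when $\pi_v(a)=v_1$; this map is injective since at most one $a\in V(H)$ maps to $v_1$ and $v_2'\notin V(G_1)$. For each $f\in E(H)$ I modify $\pi_e(f)$ according to how it meets $v_1$. (A) If $\pi_e(f)$ avoids $v_1$, keep it unchanged. (B) If $\pi_e(f)$ passes through $v_1$ via the two edges $e_i,e_j$, replace the subpath $x_i,e_i,v_1,e_j,x_j$ by the concatenation of $b_i$, the subpath $P_i-v_2$ traversed from $y_i$ to $v_2'$, the subpath $P_j-v_2$ traversed from $v_2'$ to $y_j$, and $b_j$, then extract any simple $x_i$--$x_j$ subpath of the resulting walk (using a subset of its edges). (C) If $\pi_e(f)$ ends at $v_1$ with final edge $e_i$, replace $e_i$ by the concatenation of $b_i$ and $P_i-v_2$, terminating at $v_2'=\pi_v'(a)$. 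In every case the new $\pi_e'(f)$ is a simple path in $G$ with endpoints $\pi_v'(x)$ and $\pi_v'(y)$, since the portion inside $V(G_1)-v_1$ is vertex-disjoint from the portion inside $V(G_2)-v_2$.

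\textbf{Verification and main obstacle.} Edge-disjointness of the $\pi_e'(f)$ splits into three strata: the pieces in $G_1-v_1$ come from the originally edge-disjoint $\pi_e(f)$; each bridge edge $b_i$ is used by at most one $\pi_e(f)$ because the $\pi_e(f)$ were edge-disjoint on $\delta(v_1)$; and, for the same reason, different composite paths invoke distinct indices $\ell$, so their contributions inside $G_2-v_2$ come from pairwise edge-disjoint paths $P_\ell$. For the strong-immersion claim, if $v_1$ is a branch vertex of $(\pi_v,\pi_e)$ then strongness rules out case (B), and in case (C) the new path terminates at $v_2'$, so $v_2'$ appears only as an endpoint; if $v_1$ is not a branch vertex then $v_2'$ is not a branch vertex of $(\pi_v',\pi_e')$ either, so the internal visits to $v_2'$ permitted by case (B) cause no harm. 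The single genuinely delicate step is case (B): two grounding paths $P_i,P_j$ may share many internal vertices (though never internal edges), so the concatenation is only a walk, and one must pass to a simple subpath without disturbing the edge-disjoint bookkeeping across composite paths. I expect this extraction to be the main obstacle, and it is handled cleanly by the standard fact that any walk contains a simple subpath between the same endpoints using only a subset of its edges.
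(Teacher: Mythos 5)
The paper explicitly omits the proof of this lemma, so there is nothing to compare against line by line; your argument is the natural one the author intends, and it is correct. Rerouting through the $k$ edge-disjoint $v_2$--$v_2'$ grounding paths (indexed so that $P_i$ starts with $\pi(e_i)$), checking edge-disjointness separately on $E(G_1-v_1)$, the bridge edges, and $E(G_2-v_2)$, and extracting a simple subpath in case (B) all go through exactly as you describe; your case analysis for strongness (case (B) excluded when $v_1$ is a branch vertex, $v_2'$ not a branch vertex otherwise) is also sound.
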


We now  combine the definition of edge sums along with Lemma \ref{lem:menger} to get a decomposition for graphs which do not admit a fixed clique as an immersion.  

\begin{DEF}
Let $G$ be a graph and $\alpha, \beta \ge 0$ positive integers.  Then $G$ has \emph{($\alpha, \beta)$-bounded degree} if there exist at most $\alpha$ vertices of degree at least $\beta$.
\end{DEF}

\begin{theorem}\label{thm:structweak}
Let $t \ge 1$ be a positive integer.  If $G$ is a graph which does not admit $K_t$ as a weak immersion, then either $G$ has $(t,t^2)$-bounded degree or there exist graphs $G_1$, $G_2$ which do not have an immersion of $K_t$, and an integer $k <  t^2$ such that $G$ is given by a grounded edge sum $G_1 \hat{\oplus}_k G_2$.  Moreover, $|V(G_1)|, |V(G_2)| < |V(G)|$.  
\end{theorem}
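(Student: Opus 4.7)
The plan is to assume that $G$ does not have $(t,t^2)$-bounded degree and produce the desired grounded edge sum. Under this assumption, there are at least $t+1$ vertices of degree at least $t^2$; let $X$ be a set of exactly $t+1$ such vertices. Since $G$ does not admit $K_t$ as an immersion, the hypothesis of Lemma \ref{lem:menger} must fail for $X$, so some pair $x,y \in X$ is separated by an edge cut of order strictly less than $t^2$. Fix such a pair and let $(A,B)$ be a \emph{minimum} edge cut separating them, with $x \in A$, $y \in B$ and $|\delta(A)| = k < t^2$.

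Now define $G_1$ to be the graph obtained from $G$ by consolidating $B$ to a single vertex $v_1$, and symmetrically define $G_2$ by consolidating $A$ to a single vertex $v_2$. In each $G_i$ the consolidated vertex has degree exactly $k$, and the edges of $\delta_{G_1}(v_1)$ and $\delta_{G_2}(v_2)$ are both in natural bijection with $\delta_G(A)$, which induces the bijection $\pi$ required in the definition. Unpacking the definition of $\hat{\oplus}_k$ then recovers $G$ exactly, so $G = G_1 \hat{\oplus}_k G_2$.

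It remains to verify the three required properties of this decomposition. For groundedness: by Menger's theorem applied to the minimum edge cut $(A,B)$, there are $k$ edge-disjoint paths from $x$ to $y$ in $G$. Truncating each such path at its first crossing of $\delta(A)$ and then consolidating $B$ yields $k$ edge-disjoint paths from $v_1$ to $x$ in $G_1$, and since $x \in V(G_1) \setminus \{v_1\}$, this certifies groundedness on the $G_1$ side; the $G_2$ side is symmetric, using paths from $y$ to $v_2$. For the immersion condition: since the edge sum is grounded, Lemma \ref{lem:sum} (contrapositive) gives that neither $G_1$ nor $G_2$ can immerse $K_t$, as otherwise $G$ would as well. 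For the size reduction, I need $|B| \geq 2$ (and symmetrically $|A| \geq 2$); but $\deg_G(y) \geq t^2 > k = |\delta(B)|$, so $y$ must have at least one neighbor inside $B$, giving $|B| \geq 2$. Hence $|V(G_1)| = |A| + 1 < |A| + |B| = |V(G)|$, and analogously for $G_2$.

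The only mildly delicate step is the groundedness verification, and even there the work is essentially just noting that Menger's theorem applied to a \emph{minimum} cut supplies paths that survive the consolidation as edge-disjoint paths to the newly created vertex; every other step is an unpacking of definitions or a direct application of the preceding lemmas.
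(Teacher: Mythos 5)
Your proof is correct and follows essentially the same route as the paper: find two high-degree vertices separated by a cut of order less than $t^2$, take a minimum such cut, consolidate each side, use Menger on the minimum cut for groundedness, Lemma \ref{lem:sum} for the non-immersion of $K_t$ in $G_1, G_2$, and the degree bound for the strict size decrease. The only difference is cosmetic: you start from the negation of $(t,t^2)$-bounded degree and invoke Lemma \ref{lem:menger} to produce the small cut, whereas the paper cases first on whether such a cut exists.
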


\begin{proof}
Let $G$ and $t$ be given.  Let $Z$ be the set of vertices of degree at least $t^2$.  If there exist two vertices $u$ and $v$ in $Z$ such that there do not exist $t^2$ edge-disjoint $u-v$ paths, then there exists a set $X \subseteq V(G)$ such that $u \in X$ and $v \in V(G) \setminus X$ such that $|\delta(X)| \le t^2-1$.  Choose such an $X$ to minimize $|\delta(X)|$.  Let $G_1$ be the graph obtained by consolidating $V(G) \setminus X$ and similarly, let $G_2$ be obtained from $G$ by consolidating $X$.   We see that $G = G_1 \hat{\oplus} G_2$ and that the order of the edge sum is at most $t^2 - 1$.  By our choice to minimize $|\delta(X)|$, there exist $|\delta(X)|$ edge-disjoint $u-v$ paths.  We conclude that the the edge sum is grounded, as required.  We see that $G_i$ does not contain an immersion of $K_t$ for $i = 1,2$ by Lemma \ref{lem:sum}.  Finally, $X \neq \{u\}$ and $V(G) \setminus X \neq \{v\}$ by the degree of $u$ and $v$, and so $|V(G_1)|<|V(G)|$, $|V(G_2)| < |V(G)|$.

Thus, we may assume that every pair of vertices in $Z$ are linked by at least $t^2$ edge-disjoint paths.  If $|Z| \ge t+1$, then by Lemma \ref{lem:menger}, $G$ admits $K_t$ as an immersion, a contradiction.  Thus, $|Z| \le t$, completing the proof.
\end{proof}




\section{The decomposition defined by edge sums and a structure theorem}

In the same way that clique sums give rise in a natural way to tree decompositions, we see that edge sums likewise give rise to a natural decomposition.  A \emph{near-partition} of a set $X$ is a family of subsets $X_1, \dots, X_k$, possibly empty, such that $\bigcup_1^k X_i = X$ and $X_i \cap X_j = \emptyset$ for all $1 \le i < j \le k$.  

\begin{DEF}
A \emph{tree-cut decomposition} of a graph $G$ is a pair $(T, \zX)$ such that $T$ is a tree and $\zX = \{X_t \subseteq V(G): t \in V(T)\}$ is a near-partition of the vertices of $G$ indexed by the vertices of $T$.  For each edge $e = uv$ in $T$, $T-uv$ has exactly two components, namely $T_v$ and $T_u$ containing $v$ and $u$ respectively.  The \emph{adhesion} of the decomposition is 
\begin{equation*}
max_{uv \in E(T)} \left | \delta\left ( \bigcup_{t \in V(T_v)} X_t \right)\right |
\end{equation*}
when $T$ has at least one edge, and 0 otherwise.  
The sets $\{X_t: t \in V(T)\}$ are called the \emph{bags} of the decomposition.
\end{DEF}
Note that the definition allows bags to be empty.  

Certain special cases of tree-cut decompositions have already been considered, namely the cut-width of a graph and the carving-width.  The cut-width of a graph is defined as the minimum adhesion of a tree-cut decomposition $(T, \zX)$ where $T$ is a path and every set of $\zX$ has size at most one.  Cut-width was originally studied as {\sc MINIMUM CUT LINEAR ARRANGEMENT} \cite{GJ}.  Much of the work has approached the problem from an algorithmic perspective, calculating the cut-width in specific classes of graphs as well as developing parameterized algorithms for calculating the cut-width in general.  See \cite{CS, FL, HLMP, TSB1, TSB2, Y}.  

Carving-width was introduced by Seymour and Thomas \cite{ST} and is in a certain sense analogous to the branch width of a graph.  The carving-width is the minimum adhesion of a tree-cut decomposition $(T, \zX)$ satisfying the properties that $T$ has maximum degree $3$ and the only non-empty bags of the decomposition are the leaves of $T$. 

Let $G$ be a graph and $(T, \zX)$ a tree-cut decomposition of $G$.  Fix a vertex $t \in V(T)$.  The \emph{torso of $(G, T, \zX)$ at $t$} is the graph $H$ defined as follows.  If $|V(T)| = 1$, then the torso $H$ of $(G, T, \zX)$ at $t$ is simply $G$ itself.  If $|V(T)| \ge 2$, let the components of $T-t$ be $T_1, \dots, T_l$ for some positive integer $l$.  Let $Z_i = \bigcup_{x \in V(T_i)} X_x$ for $1 \le i \le l$.  Then $H$ is made by consolidating each set $Z_i$ to a single vertex $z_i$.  The vertices $X_t$ are called the \emph{core vertices} of the torso.  The vertices $z_i$ are called the \emph{peripheral vertices} of the torso.  When there can be no confusion as to the graph $G$ in question, we will also refer to the torso of $(T, \zX)$ at a vertex $t$.

\begin{LE}\label{lem:tredecom}
Let $G$, $G_1$, and $G_2$ be graphs such that $G = G_1 \hat{\oplus}_k G_2$ for some $k \ge 0$.  If $G_i$ has a tree-cut decomposition $(T_i, \zX_i)$ for $i = 1, 2$, then $G$ has a tree-cut decomposition $(T, \zY)$ such that the adhesion of $(T, \zY)$ is equal to 
\begin{equation*}
max \{ k, adhesion(T_1, \zX_1), adhesion(T_2, \zX_2)\}.
\end{equation*}
Moreover, for every $t \in V(T)$, there exists $i \in \{1,2\}$ and a vertex $t'$ in $V(T_i)$ such that the torso $H_t$ of $(G, T, \zY)$ at $t$ is isomorphic to the torso $H'$ of $(G_i, T_i, \zX_i)$ at $t'$.  Finally, every core vertex of $H_t$ is a core vertex of $H'$.
\end{LE}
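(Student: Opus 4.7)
The plan is to build $(T, \zY)$ by gluing $(T_1, \zX_1)$ and $(T_2, \zX_2)$ at the nodes that host the two summit vertices of the edge sum. Since $\zX_i$ is a near-partition, the vertex $v_i$ lies in a unique bag $X_{t_i}$ for $i = 1, 2$. I would take $T$ to be the disjoint union of $T_1$ and $T_2$ together with a new edge $e^* = t_1 t_2$, and set $Y_{t_i} = X_{t_i} \setminus \{v_i\}$ while $Y_s = X_s$ for every other node. This is a near-partition of $V(G) = (V(G_1) \setminus \{v_1\}) \cup (V(G_2) \setminus \{v_2\})$.

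To verify the adhesion, I would argue edge by edge in $T$. For $e^*$ itself the two sides of the cut are $V(G_1) \setminus \{v_1\}$ and $V(G_2) \setminus \{v_2\}$, and by the definition of the edge sum this cut consists of exactly the $k$ sum edges. For an edge $e$ of $T_1$, let $B$ be the vertex set of the component of $T_1 - e$ not containing $t_1$; then $B \subseteq V(G_1) \setminus \{v_1\}$, and the edge sum induces a natural bijection between $\delta_{G_1}(B)$ and $\delta_G(B)$: edges of $G_1 - v_1$ that cross the cut are unchanged, while edges from $B$ to $v_1$ in $G_1$ become sum edges from $B$ to $V(G_2) \setminus \{v_2\}$ and therefore still cross the cut. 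The argument for edges of $T_2$ is symmetric, and taking the maximum over all edges of $T$ gives exactly $\max\{k, adhesion(T_1, \zX_1), adhesion(T_2, \zX_2)\}$.

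For the torso claim, fix $t \in V(T)$. If $t \in V(T_1) \setminus \{t_1\}$, then the components of $T - t$ agree with those of $T_1 - t$ except that the one containing $t_1$ is augmented by all of $T_2$. The bijection used above shows that the consolidated peripheral vertex of this enlarged component in $G$ has the same incidences as the corresponding peripheral vertex in the torso of $G_1$ at $t$, so the two torsos coincide and share the same core $Y_t = X_t$; the case $t \in V(T_2) \setminus \{t_2\}$ is symmetric. The interesting case is $t = t_1$: the torso $H_{t_1}$ of $G$ at $t_1$ has core $Y_{t_1} = X_{t_1} \setminus \{v_1\}$, a peripheral vertex $z_j$ for each component of $T_1 - t_1$, and one extra peripheral vertex $z^*$ coming from the $T_2$-branch, while the torso $H'$ of $G_1$ at $t_1$ has core $X_{t_1}$ and the same peripheral vertices $z_j$. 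I would define $\phi \colon V(H') \to V(H_{t_1})$ by $\phi(v_1) = z^*$ and the identity elsewhere. The edge-sum bijection between edges of $G_1$ incident with $v_1$ and the sum edges of $G$ matches their other endpoints across the various $Z_j$ and $X_{t_1} \setminus \{v_1\}$ identically, so $\phi$ preserves edges; since $X_{t_1} \setminus \{v_1\} \subseteq X_{t_1}$, every core vertex of $H_{t_1}$ is a core vertex of $H'$, and the case $t = t_2$ is again symmetric.

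The main obstacle is not conceptual but purely a matter of bookkeeping: one must track carefully which vertices and edges of $G$ arise from which parts of $G_1$ and $G_2$ after the various consolidations. Once the single bijection between $\delta_{G_1}(B)$ and $\delta_G(B)$ induced by the edge sum is established, every identity needed for both the adhesion bound and the torso isomorphism reduces to it.
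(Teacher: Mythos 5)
Your construction (disjoint union of $T_1$ and $T_2$ joined by the edge $t_1t_2$, with $v_1,v_2$ removed from their bags) and your edge-by-edge verification of the adhesion and torso claims are exactly the argument given in the paper. The proposal is correct; it merely spells out the torso isomorphism at $t_1,t_2$ (the map sending $v_1$ to the new peripheral vertex $z^*$) in more detail than the paper does.
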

\begin{proof}
Let $v_i \in V(G_i)$ be the vertex of degree $k$ for $i \in \{1,2\}$ such that $G$ is obtained by identifying the edges of $\delta(v_1)$ and $\delta(v_2)$.  For each $i = 1, 2$, there exists a vertex $t_i$ of $T_i$ such that $v_i \in X_{t_i}$.  We construct a tree-cut decomposition of $G$ as follows.  The tree $T$ is defined to be the disjoint union of the trees $T_1$ and $T_2$ along with an additional edge from $t_1$ to $t_2$.  For every vertex $t \in V(T)$, $t$ is a vertex of either $T_1$ or $T_2$.  Let $Y_t$ be defined as the corresponding set $X_t \setminus \{v_1, v_2\}$.  Then $\zY = \{Y_t: t \in V(T)\}$ is a near-partition of the vertices of $G$.  We claim $(T, \zY)$ is the desired decomposition.

For an edge $tt'$ of $T$, let $T_t$ and $T_{t'}$ be the two components of $T-tt'$ containing $t$ and $t'$, respectively.  Unless $tt'$ is equal to the edge $t_1t_2$, we may assume without loss of generality that $T_t$ is a proper subtree of $T_1$.  Thus, for all $x \in V(T_t)$, $Y_x = X_x$.  It follows that the cut $\bigcup_{x \in V(T_t)} Y_x$ has the same order as the cut $\bigcup_{x \in V(T_t)} X_x$, as desired.  If $tt'$ is equal to the edge $t_1t_2$, the corresponding cut in $G$ has order $k$.  We conclude that the decomposition $(T, \zY)$ has the desired adhesion.

By the construction of $(T, \zY)$, we see that the torso of $(G, T, \zY)$ at any vertex of $T$ is equal to the torso of the corresponding vertex of $T_1$ or $T_2$ of $(G_i, T_i, \zX_i)$.  Moreover, the set of core vertices is the same except in the case of the torsos at the two vertices $t_1$ and $t_2$.  There, the vertices $v_1$ and $v_2$ are no longer core vertices but every other core vertex of the torso in $(G_i, T_i, \zX_i)$ remains a core vertex of $(G, T, \zY)$.  This completes the proof of the claim.
\end{proof}

We can now state the structure theorem for graphs excluding a fixed clique immersion in terms of a tree-cut decomposition.

\begin{theorem}\label{thm:weakdecomp2}
Let $G$ be a graph and $t\ge 1$ a positive integer.  If $G$ does not admit $K_t$ as a weak immersion, then there exists a tree-cut decomposition $(T, \zX)$ of $G$ of adhesion less than $t^2$ such that each torso has $(t,t^2)$-bounded degree.
\end{theorem}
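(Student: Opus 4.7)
The plan is to induct on $|V(G)|$, applying Theorem \ref{thm:structweak} to peel off one edge sum at a time and then glue the resulting tree-cut decompositions together using Lemma \ref{lem:tredecom}. The only quantities to track through the induction are the adhesion (which must stay below $t^2$) and the $(t, t^2)$-bounded degree property of each torso.

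For the base case I would handle two situations simultaneously: if $|V(G)| \le 1$ take the trivial decomposition, and if $G$ already has $(t, t^2)$-bounded degree take $(T, \zX)$ with $T$ a single vertex $t_0$ and $X_{t_0} = V(G)$. In either case $T$ has no edges, the adhesion is $0 < t^2$, and the unique torso is $G$ itself, which has $(t, t^2)$-bounded degree. For the inductive step, assume $G$ does not admit $K_t$ as an immersion and does not have $(t, t^2)$-bounded degree. Theorem \ref{thm:structweak} then supplies graphs $G_1, G_2$, each strictly smaller than $G$ and neither of which immerses $K_t$, together with an integer $k < t^2$ such that $G = G_1 \hat{\oplus}_k G_2$. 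By induction, each $G_i$ admits a tree-cut decomposition $(T_i, \zX_i)$ of adhesion less than $t^2$ whose torsos all have $(t, t^2)$-bounded degree.

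Now apply Lemma \ref{lem:tredecom} to produce a tree-cut decomposition $(T, \zY)$ of $G$. Its adhesion is
\begin{equation*}
\max\{k,\ \mathrm{adhesion}(T_1, \zX_1),\ \mathrm{adhesion}(T_2, \zX_2)\} < t^2,
\end{equation*}
as required. Moreover, Lemma \ref{lem:tredecom} states that every torso of $(T, \zY)$ is isomorphic, as a graph, to a torso of one of $(T_1, \zX_1)$ or $(T_2, \zX_2)$. Since the $(t, t^2)$-bounded degree property depends only on the multiset of vertex degrees of a graph, it is preserved under graph isomorphism; combined with the inductive hypothesis this gives that every torso of $(T, \zY)$ has $(t, t^2)$-bounded degree, completing the induction.

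There is no real obstacle beyond bookkeeping: the structure theorem already provides the decomposition into two pieces that neither immerse $K_t$, the strict decrease in $|V(G_i)|$ drives the induction, and Lemma \ref{lem:tredecom} takes care of both the adhesion bound and the isomorphism of torsos. The one point worth stating carefully is the base-case reduction, so that at each leaf of the recursion we are entitled to hand back the trivial one-bag decomposition whose torso is the original graph and hence automatically has $(t, t^2)$-bounded degree.
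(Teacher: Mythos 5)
Your proof is correct and follows essentially the same route as the paper: induction on $|V(G)|$, the trivial one-bag decomposition in the $(t,t^2)$-bounded-degree case, Theorem \ref{thm:structweak} to split off a grounded edge sum of order $k<t^2$ into strictly smaller pieces not immersing $K_t$, and Lemma \ref{lem:tredecom} to glue the inductively obtained decompositions while controlling both the adhesion and the torsos. If anything, your write-up is slightly more careful than the paper's in explicitly invoking the strict decrease $|V(G_i)|<|V(G)|$ and the bound $k<t^2$ from Theorem \ref{thm:structweak}.
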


\begin{proof}
We proceed by induction on $|V(G)|$.  Assume as a case that there exist $G_1$, $G_2$, and an integer $k$ such that $G = G_1 \hat{\oplus}_k G_2$.  By Lemma \ref{lem:sum}, we may assume that neither $G_1$ nor $G_2$ admits an immersion of $K_t$.  By induction, each of $G_1$ and $G_2$ has a tree-cut decomposition $(T_i, \zX_i)$ of adhesion less than $t^2$ such that each torso of $(G_i, T_i, \zX_i)$ has $(t,t^2)$-bounded degree.  By Lemma \ref{lem:tredecom}, we see that $G$ as well has a tree-cut decomposition such that every torso has $(t,t^2)$-bounded degree, as desired. Note that here we are using the fact that the torsos of the decomposition of $G$ are isomorphic to the respective torsos of the decompositions of each of $G_1$ and $G_2$.  

Thus, by Theorem \ref{thm:structweak}, we may assume that $G$ has $(t,t^2)$-bounded degree.  The trivial tree-cut decomposition with all of $G$ in a single bag satisfies the statement of the theorem, completing the proof.
\end{proof}

The previous theorem states that if a graph does not admit $K_t$ as an immersion then it has a certain decomposition. The converse statement is clearly not true.  The graph $K_t$ itself trivially admits a tree-cut decomposition where each torso has $(t,t^2)$-bounded degree, namely by including every vertex of the $K_t$ in a single bag (the torso actually has $(0,t)$-bounded degree).  In the next theorem, we see however that the converse is approximately true in that any graph admitting a tree-cut decomposition of adhesion less than $r$ such that each torso has $(r,r)$-bounded degree does not admit $K_{r + 1}$ as an immersion.  

\begin{theorem}\label{thm:upper}
Let $G$ be a graph and $r \ge 1$ a positive integer.  If $G$ admits a tree-cut decomposition $(T, \zX)$ of adhesion less than $r$ such that every torso has $(r,r)$-bounded degree, then $G$ does not admit an immersion of $K_{r+1}$.
\end{theorem}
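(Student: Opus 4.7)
The plan is induction on $|V(T)|$. For the base case $|V(T)|=1$, the torso at the unique vertex equals $G$, so $G$ itself has at most $r$ vertices of degree at least $r$; since each branch vertex of a $K_{r+1}$ immersion is the endpoint of $r$ edge-disjoint composite paths and therefore has degree at least $r$, no such immersion can exist.

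For the inductive step assume $|V(T)|\geq 2$ and suppose, toward a contradiction, that $G$ admits $K_{r+1}$ as an immersion. Pick a leaf $t$ of $T$ with neighbor $t'$, so $|\delta(X_t)|<r$ by the adhesion bound. Let $a$ and $b=r+1-a$ denote the numbers of branch vertices in $X_t$ and in $V(G)\setminus X_t$, respectively. The $ab$ edges of $K_{r+1}$ joining the two sides correspond to edge-disjoint composite paths each crossing $\delta(X_t)$, so $ab\leq|\delta(X_t)|<r$. Since $a+b=r+1$ with $a,b\geq 1$ would give $ab\geq r$, we must have $a=0$ or $a=r+1$. If $a=r+1$, then all branch vertices lie in $X_t$ and are core vertices of the torso $H_t$ at $t$; consolidating $V(G)\setminus X_t$ to a single peripheral vertex does not merge edges incident to $X_t$, so each branch vertex keeps degree at least $r$ in $H_t$, giving $r+1$ vertices of degree at least $r$ and contradicting the $(r,r)$-bounded degree of $H_t$.

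Otherwise $a=0$: consolidate $X_t$ in $G$ to a single vertex $z$, obtaining a graph $G^*$ that still immerses $K_{r+1}$, since the composite paths become edge-disjoint walks in $G^*$ between the same branch vertices and each such walk contains an edge-disjoint path with the same endpoints. Set $T^*=T-t$, $X^*_{t'}=X_{t'}\cup\{z\}$, and $X^*_s=X_s$ for $s\notin\{t,t'\}$; unpacking definitions shows that $(T^*,\zX^*)$ is a tree-cut decomposition of $G^*$ whose adhesion equals that of $(T,\zX)$ restricted to $E(T^*)$, hence is still less than $r$, and whose torso at each $s\in V(T^*)$ is isomorphic to the torso of $G$ at $s$ in $(T,\zX)$, the vertex $z$ merely playing the role previously taken by the peripheral vertex representing the component containing $t$. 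The inductive hypothesis then applies to $G^*$ and rules out the $K_{r+1}$ immersion, a contradiction. The only non-routine step is verifying this last torso identity at $t'$, i.e.\ that reclassifying the peripheral vertex corresponding to $X_t$ as a core vertex leaves the torso unchanged as a graph; everything else is bookkeeping from the definitions and the simple edge-counting argument above.
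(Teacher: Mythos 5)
Your proof is correct, but it takes a longer and structurally different route than the paper's. The paper argues directly: between any two branch vertices of a $K_{r+1}$ immersion there are $r$ edge-disjoint paths (the composite path joining them together with the $r-1$ concatenations through the remaining branch vertices), so no edge of $T$, whose corresponding cut has fewer than $r$ edges, can separate two branch vertices; hence all $r+1$ branch vertices lie in a single bag, and the torso there has $r+1$ vertices of degree at least $r$, contradicting $(r,r)$-bounded degree. Your leaf-peeling induction replaces that connectivity observation with the weaker counting bound $ab\le|\delta(X_t)|<r$ (using only the composite paths that actually cross the cut) together with the arithmetic fact that $a+b=r+1$ and $a,b\ge 1$ would force $ab\ge r$; the price is the bookkeeping needed to verify that consolidating a leaf bag preserves the decomposition, its adhesion, its torsos, and the $K_{r+1}$ immersion. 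What your approach buys is self-containment --- you never need the observation that branch vertices of a $K_{r+1}$ immersion are pairwise $r$-edge-connected --- and the consolidation step you verify is a reusable tool. What the paper's approach buys is brevity: the whole argument is three sentences, and both arguments terminate in the identical contradiction, namely a torso with $r+1$ vertices of degree at least $r$. The only caveat worth recording in your write-up is the degenerate case $X_t=\emptyset$ in the $a=0$ branch, where ``consolidating'' the empty bag should simply mean deleting the leaf $t$; this is trivial but should be said.
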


\begin{proof}
Assume, to reach a contradiction, that $G$ contains an immersion of $K_{r+1}$.  Note that between each pair of branch vertices of the immersion there exist $r$ edge-disjoint paths.  By the adhesion bound on our decomposition, we see that all the branch vertices must be contained in a single bag of the decomposition.  But then the torso of that bag must contain at least $r+1$ vertices of degree $r$, a contradiction.  
\end{proof}

Given that $(t, t^2)$-bounded degree implies $(t^2, t^2)$-bounded degree, Theorem \ref{thm:upper} implies that if a graph $G$ admits the structure given in Theorem \ref{thm:weakdecomp2}, then $G$ does not admit an immersion of $K_{t^2 +1}$.  


\section{The width of a tree-cut decomposition}

Given the definition of tree-cut decompositions, it is natural to ask when does a graph have a bounded width tree-cut decomposition.  However, this will require a suitable definition of the width of a tree-cut decomposition.  If we follow the model of tree decompositions, the most natural measure would be to require the bags to have bounded size.  However, this runs into an immediate problem.  Let $P_t$ be the graph obtained by adding $t-1$ parallel edges to each edge of a path on $t$ vertices.  Then $P_{t^2}$ contains contains $K_t$ as an immersion and should therefore have large tree-cut width.  At the same time, $P_t$ has a tree-cut decomposition where each bag has one vertex and every torso has at most 3 vertices.  We conclude that any suitable width measure for tree-cut decompositions must take into account the adhesion of the decomposition.  

Observe that considering the adhesion alone does not yield a satisfying definition of width.  Every graph $G$ of degree at most $k$ trivially admits a tree-cut decomposition $(T, \zX)$ of adhesion at most $k$ with the additional property that each bag contains at most one vertex. Let $T$ be the star with $|V(G)|$ leaves and assign one vertex of $G$ to each leaf.  Such a decomposition fails to distinguish between the large variety of graphs of bounded degree; for example such graphs include expander graphs which have tree width roughly linear in the total number of vertices.

Looking closer at the previous example, one notices that for the decomposition given, the center vertex of the star will have an arbitrarily large torso.  Thus, one might consider requiring both the adhesion and the torsos to have bounded size.  However, by the definition of torso this will require the tree $T$ of the decomposition $(T, \zX)$ to have degree at most $k$.  Thus, if we consider a tree-cut decomposition of the star on $n+1$ vertices and we impose a bound on the size of the torsos, we will force the adhesion to be arbitrarily large by choosing $n$ sufficiently large.  However, such large torsos arise due to a large number of pendant vertices in the torso.

Keeping these examples in mind, we will formulate a definition of the width of a tree-cut decomposition based on the adhesion and size of the torsos after accounting for vertices of degree one and two.  The width measure is based on what we call the 3-center of the torsos of the decomposition.  We present two equivalent ways of describing the 3-center.  We first give the definition based on immersions which, although more technical, will make several subsequent statements easier to prove.  The second way of describing the 3-center is based on repeatedly suppressing small degree vertices.  We present it as Lemma \ref{lem:altcenter}.

Given a graph $G$ and a subset $X \subseteq V(G)$, we define the \emph{3-center of $(G, X)$} as the maximum (with respect to containment as an immersion) $H$ such that there exists an immersion $H$ in $G$ given by maps $(\pi_v, \pi_e)$ with the property that 
\begin{equation}
X \subseteq \pi_v(V(H)) \text{, and for every vertex }x \in V(H)\text{ with }deg_H(x) \le 2\text{ we have that }\pi_v(x) \in X.  
\end{equation}
We will also refer to the $3$-center of $(G, X)$ as the $3$-center of $G$ on $X$.

It is not immediately clear that the 3-center is well defined.  We now show that such a maximal $H$ as in the definition is unique.
\begin{LE}\label{lem:centerdef}
Let $G$ be a graph and $X \subseteq V(G)$.  There is a unique graph $H$ which is maximal with respect to containment as an immersion such that there exists an immersion of $H$ in $G$ $(\pi_v, \pi_e)$ satisfying (1).
\end{LE}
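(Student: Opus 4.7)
The plan is to construct a canonical representative of the maximum by a reduction procedure on $G$, and then establish uniqueness by a size argument. Write $\mathcal{H}$ for the set of graphs $H$ immersed in $G$ by maps $(\pi_v, \pi_e)$ satisfying (1). This set is nonempty: take $H = (X, \emptyset)$ with $\pi_v$ the inclusion, which trivially satisfies (1). It is also finite up to isomorphism, since edge-disjointness of composite paths forces $|E(H)| \leq |E(G)|$, while (1) together with a degree-sum count forces $|V(H)| \leq |X| + \tfrac{2}{3}|E(G)|$.

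I would next introduce four safe reductions on the pair $(G, X)$, each strictly decreasing $|V(G)| + |E(G)|$: (a) delete an isolated vertex $v \notin X$; (b) delete a pendant vertex $v \notin X$ together with its incident edge; (c) for a degree-$2$ vertex $v \notin X$ with distinct neighbors $u, w$, replace the edges $uv, vw$ by a single edge $uw$ and delete $v$; (d) for a degree-$2$ vertex $v \notin X$ whose two incident edges join $v$ to the same vertex, delete $v$ and both its edges. Iterating until no reduction applies yields a graph $H^*$ in which every vertex outside $X$ has degree at least $3$.

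The central claim is that each reduction preserves $\mathcal{H}$, so $\mathcal{H}_G = \mathcal{H}_{H^*}$. The inclusion $\mathcal{H}_{G'} \subseteq \mathcal{H}_G$ is immediate by reversing the operation on composite paths (and edge-disjointness in $G$ is guaranteed because the new edge $uw$ in (c) can be used by at most one composite path in $G'$). For the converse, fix $H \in \mathcal{H}_G$ with immersion $(\pi_v, \pi_e)$. The affected vertex $v$ cannot be a branch vertex: if $\pi_v(x) = v$ then the edges of $H$ at $x$ correspond to distinct composite paths, each consuming a distinct edge incident with $v$ in $G$, so $\deg_H(x) \leq \deg_G(v) \leq 2$; by (1) this forces $v \in X$, a contradiction. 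Nor can $v$ be an internal vertex of any composite path: in (a) and (b) this is ruled out by degree; in (c) the path must enter and leave through $uv, vw$ and can be rerouted through $uw$ in $G'$; in (d) the path would traverse the two parallel edges and revisit $u$, violating the simplicity of composite paths. Hence the immersion descends to $G'$.

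Because every vertex of $H^*$ outside $X$ has degree at least $3$, the identity immersion of $H^*$ in itself satisfies (1), so $H^* \in \mathcal{H}_{H^*} = \mathcal{H}_G$; conversely every element of $\mathcal{H}_{H^*}$ is, by definition, immersed in $H^*$. Thus $H^*$ is a maximum of $\mathcal{H}_G$. For uniqueness up to isomorphism: if $H_1, H_2$ are both maxima then each immerses the other, and injectivity of $\pi_v$ together with edge-disjointness of composite paths gives $|V(H_1)| = |V(H_2)|$ and $|E(H_1)| = |E(H_2)|$; equality of edge counts forces every composite path to consist of a single edge and $\pi_v$ to be a bijection, which is exactly a graph isomorphism $H_1 \cong H_2$. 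The main obstacle in executing this plan is the case analysis verifying that the four reductions preserve $\mathcal{H}$, and in particular ruling out that a vertex targeted for removal could be concealed internally on some composite path.
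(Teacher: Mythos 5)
Your proof is correct, but it takes a genuinely different route from the paper's. The paper argues by contradiction: it fixes two distinct maximal graphs $H_1,H_2$ over a counterexample $G$ chosen to minimize $|V(G)|$, uses maximality to show the composite paths of each immersion can share no internal vertices (any crossing would let one augment $H_i$ by a new degree-$4$ branch vertex), uses minimality to show $G$ has no vertex of degree at most two outside $X$, and concludes that both $H_1$ and $H_2$ must in fact equal $G$. You instead construct the canonical maximum directly by the suppression procedure --- precisely the ``equivalent'' description of the $3$-center that the paper only states as a remark after the lemma --- and verify that each elementary reduction (deleting an isolated or pendant vertex outside $X$, suppressing a degree-$2$ vertex outside $X$, deleting a degree-$2$ vertex outside $X$ carried by a doubled edge) leaves the family $\mathcal{H}$ of admissible immersed graphs invariant; your case analysis correctly rules out the reduced vertex being a branch vertex (its degree in $H$ would be at most $2$, violating (1)) or a concealed internal vertex of a composite path, and your separation of cases (c) and (d) properly avoids creating loops. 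Uniqueness then follows from your counting argument for mutually immersed graphs, which also covers the passage from ``maximum exists'' to ``the maximal element is unique up to isomorphism.'' What your approach buys is a constructive, confluence-free derivation that simultaneously justifies the paper's subsequent remark and sidesteps the paper's somewhat terse final step where one must argue $|V(H_1)|=|V(H_2)|=|V(G)|$; what the paper's approach buys is brevity and the extra structural fact that a maximal immersion satisfying (1) can be realized with internally vertex-disjoint composite paths.
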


\begin{proof}
Assume there exist distinct $H_1$ and $H_2$ maximal with respect to containment as an immersion and a pair $(G, X)$ such that there exist maps $(\pi_v^i, \pi_e^i)$ defining the immersion for $i = 1, 2$ satisfying (1) for $H_1$ and $H_2$, respectively.  Moreover, assume that we choose such $H_1, H_2, G, (\pi_v^1, \pi_e^1), (\pi_v^2, \pi_e^2)$ to minimize $|V(G)|$.  

First, we see that for each $i = 1, 2$ and for every pair of edges $f, f' \in E(H_i)$, the paths $P = \pi_e^i(f)$ and $P' = \pi_e^i(f')$ do not have a common internal vertex.  Let $x \in V(G)$ be a common vertex in $P$ and $P'$ which is not an endpoint of either $P$ or $P'$.  Note that $x$ has degree at least 4 in $G$.  Let $H_i'$ be obtained by subdividing the edges $f$ and $f'$ and identifying the two new vertices to a vertex $z$.  Then $H_i'$ is contained as an immersion in $G$ by mapping $z$ to the vertex $x \in V(G)$.  Moreover, the immersion will satisfy (1) as the vertex $z$ which is mapped to a vertex of $V(G) \setminus X$ has degree four.  A similar argument shows that for every edge $f \in E(H_i)$ and for every vertex $x \in V(H_i)$, the path $\pi_e^i(f)$ does not contain $\pi_v^i(x)$ as an internal vertex.  

By our choice to minimize $|V(G)|$, there cannot exist a vertex $v$ of $V(G) \setminus X$ of degree at most two in $G$. To see this, by (1), the vertex $v$ cannot be a branch vertex of either immersion and so if $G'$ is obtained by contracting an edge incident with $v$, then both $H_1$ and $H_2$ are contained as an immersion in $G'$, contradicting our choice to minimize $|V(G)|$.  It follows that $|V(H_1)| = |V(H_2)| = |V(G)|$, and therefore the previous paragraph, we see that every edge of $H_i$ is mapped to an edge of $G$.  By the maximality of both $H_1$ and $H_2$, we have that $ H_1 = G = H_2$, a contradiction.  
\end{proof}

The 3-center of $(G, X)$ can be equivalently thought of as the unique graph obtained by repeatedly suppressing any vertex of $V(G) \setminus X$ of degree at most two.    
\begin{LE}\label{lem:altcenter}
Let $G$ be a graph and $X \subseteq V(G)$.  There is a unique graph $H$ such that $H$ is obtained by a maximal sequence of suppressing a vertex not in $X$ of degree at most two and deleting any resulting loops.  Moreover, $H$ is equal to the 3-center of $(G, X)$.
\end{LE}
\begin{proof}
It suffices to show that $H$ is equal to the 3-center of $(G, X)$ as uniqueness then follows from Lemma \ref{lem:centerdef}.  Assume the claim is false, and pick $(G, X)$ a counter-example minimizing $|V(G)|$.  Clearly, $G$ has at least one vertex of degree at most two in $V(G) \setminus X$.  Fix a maximal sequence of vertex suppressions of vertices of degree at most $2$ in $V(G) \setminus X$ which results in a graph $H$.  Let $v$ be the first vertex suppressed in the sequence, and let $G'$ be the graph obtained by suppressing $v$ and deleting any resulting loops.  By the minimality of $G$, $H$ is equal to the $3$-center of $(G', X)$.  Moreover, $H$ immerses in $G$ while maintaining the property that no branch vertex in $V(G) \setminus X$ has degree at most two.  If $H$ is not also the $3$-center of $(G, X)$, there exists an immersion of a graph $\bar{H}$ which strictly contains $H$ as an immersion with no branch vertices of degree less than $3$ in $V(G)\setminus X$.  The immersion in $G$ yields an immersion in $G'$ as well, yielding a contradiction to the fact that $H$ is the 3-center of $G$.  This completes the proof.
\end{proof}

Note that by the definition of the 3-center of a pair $(G, X)$, for any set $Y \subseteq X$, the $3$-center of $(G, Y)$ is contained as an immersion in the $3$-center of the pair $(G, X)$. 

We now give the definition of the width of a tree-cut decomposition.
\begin{DEF}
Let $G$ be a graph and $(T, \zX)$ a tree-cut decomposition of $G$.  For each vertex $t \in V(T)$, let $X_t$ be the bag at the vertex $t$.   Let $H_t$ be the torso of $(T, \zX)$ at $t$, and let $\bar{H}_t$ be the 3-center of $(H_t, X_t)$.  Let $\alpha$ be the adhesion of the decomposition.  The \emph{width} of the decomposition is 
\begin{equation*}
max \left [ \{\alpha\} \cup \{|V(\bar{H}_t)|: t \in V(T)\} \right ].
\end{equation*}
The \emph{tree-cut width} of the graph $G$, also written $tcw(G)$, is the minimum width of a tree-cut decomposition.
\end{DEF}

It is easy to see that any tree has tree-cut width one, and that a cycle has tree-cut width two. Note as well that if a graph is 3-edge connected, then the 3-center of $(H_t, X_t)$ is simply the graph $H_t$ for every torso $H_t$ at a vertex $t$ in the decomposition.  In effect, the usage of the 3-centers is simply to ensure that vertices of degree one and two don't have the effect of blowing up the 3-center at any vertex of $T$.

We now prove two basic properties of the tree-cut width, namely that it is preserved under taking immersions and under edge sums.

\begin{LE}\label{lem:bdedimwidth}
Let $G$ and $H$ be graphs such that $G$ admits an immersion of $H$.  Then $tcw(H) \le tcw(G)$.
\end{LE}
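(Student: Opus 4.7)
The plan is to take a tree-cut decomposition $(T, \zX)$ of $G$ of minimum width and transport it through the immersion $(\pi_v, \pi_e)$ of $H$ in $G$. For each $t \in V(T)$, set $Y_t = \pi_v^{-1}(X_t) \subseteq V(H)$; since $\pi_v$ is injective, $\zY = \{Y_t : t \in V(T)\}$ is a near-partition of $V(H)$, so $(T, \zY)$ is a tree-cut decomposition of $H$. I will then show that the width of $(T, \zY)$ is at most the width of $(T, \zX)$, which gives $tcw(H) \le tcw(G)$.

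The adhesion bound is immediate. Fix an edge $uv \in E(T)$ and let $A = \bigcup_{s \in V(T_v)} X_s$ and $B = \bigcup_{s \in V(T_v)} Y_s = \pi_v^{-1}(A)$. Each edge of $\delta_H(B)$ is the label of a composite path in $G$ with one endpoint in $A$ and one outside $A$, so it uses at least one edge of $\delta_G(A)$. Edge-disjointness of the composite paths then yields $|\delta_H(B)| \le |\delta_G(A)|$.

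For the torso bound, fix $t \in V(T)$ and write $H_t^G$, $H_t^H$ for the torsos of $G$ and $H$ at $t$, with peripheral vertices $z_i$ and $z_i^H$ indexed by the same components of $T - t$. The plan is to build an immersion of $H_t^H$ in $H_t^G$ that sends $Y_t$ into $X_t$ via $\pi_v$ and each $z_i^H$ to $z_i$. For each edge $f$ of $H_t^H$ (corresponding to an edge $f \in E(H)$), project $\pi_e(f)$ onto $H_t^G$ via the consolidation map: replace every vertex lying in some $Z_i$ by $z_i$, discard those edges of $\pi_e(f)$ with both endpoints in a common $Z_i$ (such edges become loops removed during consolidation), and keep every other edge, each of which corresponds to a unique edge of $H_t^G$. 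The projection is a walk in $H_t^G$ with the correct endpoints; shortcut it to a path. Because distinct composite paths are edge-disjoint in $G$ and the consolidation map is injective on the kept edges, the resulting paths in $H_t^G$ are pairwise edge-disjoint, giving the claimed immersion.

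To finish, compose this immersion with the immersion $\bar{H}_t^H \hookrightarrow H_t^H$ supplied by the definition of the $3$-center, and add, as isolated branch vertices, the elements of $X_t \setminus \pi_v(Y_t)$, obtaining a graph $M$ immersed in $H_t^G$. By construction $X_t \subseteq \pi_v(V(M))$, and every branch vertex of $M$ outside $X_t$ has degree at least $3$ in $M$ (non-core vertices of $\bar{H}_t^H$ have degree at least $3$ by definition of the $3$-center, while the added vertices all lie in $X_t$). The maximality definition of the $3$-center then gives $M \hookrightarrow \bar{H}_t^G$, so $|V(\bar{H}_t^H)| \le |V(M)| \le |V(\bar{H}_t^G)|$. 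Combined with the adhesion bound, this shows $(T, \zY)$ has width at most that of $(T, \zX)$. The main technical step is the torso projection: one must carefully check that composite paths — which may meander across many bags of $(T, \zX)$ — yield well-defined, edge-disjoint walks in each torso after consolidation, and that the core and peripheral vertices of $H_t^H$ land in the core and peripheral vertices of $H_t^G$ in a way that makes the $3$-center comparison work.
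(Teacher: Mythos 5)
Your argument is correct, but it takes a genuinely different route from the paper's. The paper exploits the characterization of weak immersion by elementary operations: it reduces to checking that the tree-cut width cannot increase when one deletes an edge, deletes an isolated vertex, or splits off two incident edges. For each such operation the vertex set is unchanged (or only shrinks), so the very same decomposition $(T,\zX)$ of $G$ serves for the modified graph; the only work is to observe that each torso either loses edges or is itself obtained from the old torso by a single split, whence any immersion satisfying (1) in the new torso is also one in the old torso, and the 3-centers can only shrink. You instead transfer the decomposition in one shot, pulling the bags back through $\pi_v$ and projecting the composite paths onto each torso via the consolidation map. The price is the more delicate global verification -- edge-disjointness of the projected walks, the matching of core and peripheral vertices, and the composition with the 3-center immersion followed by the appeal to maximality -- all of which you handle correctly (the adhesion bound via edge-disjointness of crossing composite paths is also right). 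The benefit is that you avoid the induction over elementary operations and exhibit the witnessing decomposition of $H$ explicitly; in the end both proofs produce essentially the same decomposition $Y_t=\pi_v^{-1}(X_t)$, but the paper's local argument hides the torso-projection work inside the easy case analysis for a single split.
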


\begin{proof}
Clearly the tree-cut width cannot increase upon deleting an edge or an isolated vertex.  Thus, it suffices to show that the statement holds when there exist edges $xy$ and $yz$ in $G$ and $G' = (G- \{xy, yz\} )+ xz$ for distinct vertices $x$, $y$, and $z$.  Let $(T, \zX)$ be a tree-cut decomposition of $G$ of minimum width.  As $V(G') = V(G)$, $(T, \zX)$ is a tree-cut decomposition of $G$ as well.  For any edge $e \in E(T)$, the number of edges in $G'$ crossing the cut defined by $e$ is at most the number of edges in $G$ crossing the cut defined by $e$.  Thus, the adhesion of the decomposition $(T, \zX)$ as a decomposition of $G'$ is at most the adhesion when considered a decomposition of $G$.  

Fix a vertex $t \in V(T)$, and let $H$ be the torso of $(G, T, \zX)$ at $t$ and let $H'$ be the torso of $(G', T, \zX)$ at $t$.  Note that $V(H) = V(H')$.  Every vertex of $H$ corresponds to a non-empty subset of the vertices of $G$ (possibly just a single vertex).  If we consider the possible cases for how the vertices $\{x, y, z\}$ can be split among these subsets, we see that either $E(H)  \subseteq E(H')$, or alternatively, $H'$ is obtained from $H$ by splitting off two incident edges.  In either case, we see that any immersion of a graph $J$ in $H'$ satisfying (1) will be an immersion in $H$ satisfying (1) as well.   Thus, the 3-center of $(H', X_t)$ has at most as many vertices as the 3-center of $(H, X_t)$.  We conclude that the tree-cut width of $G'$ is at most the tree-cut width of $G$, as desired.
\end{proof}

\begin{LE}\label{lem:bdwidthsum}
Let $G$, $G_1$, and $G_2$ be graphs and let $k \ge 1$ be a positive integer.  Assume $G = G_1 \hat{\oplus}_k G_2$.  If $G_1$ and $G_2$ each have tree-cut width at most $w$ for $w \ge k$, then $G$ has tree-cut width at most $w$.
\end{LE}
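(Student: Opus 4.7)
The plan is to apply Lemma \ref{lem:tredecom} directly to combine optimal tree-cut decompositions of $G_1$ and $G_2$, and then verify that the width of the resulting decomposition of $G$ is bounded by $w$. For $i=1,2$, choose tree-cut decompositions $(T_i,\zX_i)$ of $G_i$ of width at most $w$; let $\bar H'_{t'}$ denote the $3$-center of the torso of $G_i$ at $t'$ on its core bag $X_{t'}$. By assumption, $|V(\bar H'_{t'})|\le w$ for every $t'\in V(T_i)$ and the adhesion of $(T_i,\zX_i)$ is at most $w$. Apply Lemma \ref{lem:tredecom} to obtain a tree-cut decomposition $(T,\zY)$ of $G$.

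First I would check the adhesion. By Lemma \ref{lem:tredecom}, the adhesion of $(T,\zY)$ is $\max\{k,\text{adhesion}(T_1,\zX_1),\text{adhesion}(T_2,\zX_2)\}$, which is at most $\max\{k,w\}=w$ since $k\le w$ by hypothesis. So the adhesion contribution to the width is under control.

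Next, for each $t\in V(T)$, I would bound the size of the $3$-center $\bar H_t$ of the torso of $G$ at $t$ on the bag $Y_t$. By Lemma \ref{lem:tredecom}, there exist $i\in\{1,2\}$ and $t'\in V(T_i)$ so that the torso $H_t$ of $G$ at $t$ is isomorphic to the torso $H'$ of $G_i$ at $t'$, and every core vertex of $H_t$ is a core vertex of $H'$; that is, under the isomorphism, $Y_t\subseteq X_{t'}$. Using the remark (made just after Lemma \ref{lem:centerdef}) that the $3$-center is monotone under containment of the designated set, the $3$-center of $(H_t,Y_t)$ is contained as an immersion in the $3$-center of $(H',X_{t'})$. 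Since an immersion is injective on vertices, this gives $|V(\bar H_t)|\le|V(\bar H'_{t'})|\le w$.

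Combining the two bullets, the width of $(T,\zY)$ is at most $w$, so $tcw(G)\le w$, as required. There is no real obstacle here: the content of the lemma is essentially just bookkeeping on top of Lemma \ref{lem:tredecom}. The only subtlety worth flagging is the use of monotonicity of the $3$-center: shrinking the designated set from $X_{t'}$ to $Y_t$ (by removing $v_1$ or $v_2$ at the two distinguished bags $t_1,t_2$) can only allow more vertices of degree at most two to be suppressed, so the $3$-center never grows. Everywhere else the bag, torso, and designated set are unchanged, so the $3$-center is literally the same graph as in the decomposition of $G_i$.
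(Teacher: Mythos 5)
Your proposal is correct and follows the paper's own proof essentially verbatim: combine the two decompositions via Lemma \ref{lem:tredecom}, bound the adhesion by $\max\{k,w\}=w$, and use the monotonicity of the $3$-center under shrinking the designated set to bound each torso's $3$-center by $w$. No further comment is needed.
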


\begin{proof}
Let $(T_i, \zX_i)$ be a tree-cut decomposition of $G_i$ of width at most $w$ for $i = 1, 2$.  Lemma \ref{lem:tredecom} implies that there exists a tree-cut decomposition $(T, \zX)$ of $G$ of adhesion at most $w$ such that for every $t \in V(T)$, there exists a vertex $t'$ in $V(T_i)$ for one of $i = 1, 2$ satisfying the following.  Let $X_t \in \zX$ be the bag corresponding to $t$ and $H$ the torso of $(G, T, \zX)$ at $t$, and let $X_{t'} \in \zX_i$ be the bag of $(T_i, \zX_i)$ corresponding to $t'$ and $H'$ the torso of $(G_i, T_i, \zX_i)$ at $t'$.  Then $H = H'$ and $X_{t} \subseteq X_{t'}$.   It follows that the 3-center of $(H, X_{t})$ is contained as an immersion in the 3-center of $(H', X_{t'})$.   We conclude that the width of $(T, \zX)$ is at most $w$, as desired.  
\end{proof}

We will now show that if a graph has both bounded degree and bounded tree width, then it has bounded tree-cut width.  We first need the definition of a tree decomposition.

\begin{DEF}
A \emph{tree decomposition} of a graph $G$ is a pair $(T, \zX)$ such that $T$ is a tree and $\zX = \{X_t \subseteq V(G): t \in V(T)\}$ are subsets of $V(G)$ indexed by the vertices of $T$.  Moreover, we require that the subsets $\zX$ satisfy the following:
\begin{enumerate}
\item $\bigcup_{\{t \in V(T)\}} X_t = V(G)$ and for every edge $e = uv$ of $G$, there exists $t\in V(T)$ such that $\{u, v\} \subseteq X_t$.
\item for every vertex $v \in V(G)$, the set $\{t \in V(T): v \in X_t\}$ induces a connected subtree of $T$.
\end{enumerate}
The \emph{width} of the decomposition is $max_{\{t \in V(T)\}} |X_t|-1$ and the \emph{tree width} of a graph is the minimum width of a tree decomposition.  
\end{DEF}

\begin{LE}\label{lem:bdedtw}
Let $w, d \ge 1$ be positive integers and let $G$ be a graph with $\Delta(G) \le d$ and  tree width at most $w$. Then there exists a tree-cut decomposition of adhesion at most $(2w+2)d$ such that every torso has at most $(d+1)(w+1)$ vertices.  Specifically, the tree-cut width of $G$ is at most $(2w+2)d$.
\end{LE}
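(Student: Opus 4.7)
The plan is to convert a tree decomposition $(T, \{X_t\})$ of width $w$ into a tree-cut decomposition via the standard ``top-bag'' near-partition, and then exploit $\Delta(G) \le d$ to bound both the adhesion and the torso sizes.

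We may assume $G$ is connected (otherwise handle each component separately). Let $(T, \{X_t\})$ be a tree decomposition of $G$ of width $w$. We first preprocess $T$ so that $X_t \cap X_{t'} \ne \emptyset$ on every edge $tt'$: whenever the separator is empty, the connectedness of $G$ forces one side of the $T$-edge to have no vertices appearing in any bag of that side, so we delete it. Next, we root $T$ at a vertex $r$ and for each $v \in V(G)$ let $\tau(v)$ be the node of $\{t : v \in X_t\}$ closest to $r$; set $Y_t = \{v : \tau(v) = t\}$. Finally, we iteratively discard any subtree $T_c$ rooted at a child $c$ of some $t$ for which $\bigcup_{s \in V(T_c)} Y_s = \emptyset$, noting that the vertices appearing in such bags already lie in $X_t \cap X_c \subseteq X_t$, so no edge of $G$ loses bag-coverage. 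The resulting pair $(T, \{Y_t\})$ is a tree-cut decomposition of $G$.

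For the adhesion at an edge $tt'$ of $T$, any edge of $G$ crossing the corresponding cut has an endpoint in the separator $X_t \cap X_{t'}$ (by the standard fact that $X_t \cap X_{t'}$ separates the two sides in $G$). Since $|X_t \cap X_{t'}| \le w+1$ and $\Delta(G) \le d$, the cut has size at most $d(w+1) \le (2w+2)d$. For the torso $H_t$ at node $t$, its core is $Y_t$, which has size at most $w+1$ in general and at most $w$ if $t \ne r$ (since then $Y_t \subseteq X_t \setminus X_p$ where $p$ is the parent and $X_t \cap X_p \ne \emptyset$ by preprocessing); its peripheral vertices are in bijection with the neighbors of $t$ in $T$. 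The key claim is that each non-empty child $c$ of $t$ can be associated with a distinct edge $e_c$ of $G$ incident to $X_t$: by connectedness and $A_c := \bigcup_{s \in V(T_c)} Y_s \ne \emptyset$, there is an edge $u_c v_c$ with $u_c \in X_t \cap X_c$ and $v_c \in A_c$. Distinctness follows because the sets $A_c$ are pairwise disjoint and no single vertex $v \in A_c$ can also lie in $X_{c'}$ for a sibling $c'$: that would force its bag-subtree to cross $t$ and hence $\tau(v)$ would lie at or above $t$, contradicting $v \in A_c$. Since at most $d(w+1)$ edges of $G$ are incident to $X_t$, we obtain $\deg_T(t) \le d(w+1) + 1$ (the $+1$ for the parent edge, dropped at $t = r$), and combining with the bound on $|Y_t|$ yields $|V(H_t)| \le (d+1)(w+1)$ in every case.

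Since the 3-center $\bar{H}_t$ is obtained from $H_t$ by suppressing low-degree vertices, $|V(\bar{H}_t)| \le (d+1)(w+1) \le (2w+2)d$ for $d \ge 1$, and combined with the adhesion bound this gives tree-cut width at most $(2w+2)d$. The subtle point in the plan is the distinctness of the assigned edges $e_c$, which requires tracking where each separator $X_t \cap X_c$ sits relative to the sets $A_c$; the several preprocessing steps are what rule out degenerate empty-subtree configurations that would otherwise blow up $\deg_T(t)$, and what guarantee $|Y_t| \le w$ at non-root vertices so that the torso count comes out exactly $(d+1)(w+1)$.
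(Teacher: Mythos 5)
Your proof is correct and follows essentially the same route as the paper: convert a width-$w$ tree decomposition into a tree-cut decomposition by assigning each vertex to a single bag, and bound the degree of $T$ (hence the torso sizes) by charging each non-empty subtree to a distinct edge of $G$ incident to $X_t$. The only real difference is the adhesion bound, where you observe directly that every crossing edge has an endpoint in the separator $X_t \cap X_{t'}$, giving the sharper bound $d(w+1)$, whereas the paper uses a matching/vertex-cover argument to obtain $(2w+2)d$; both suffice for the stated lemma.
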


\begin{proof}
We may assume $G$ is connected.  Let $(T, \zX)$ be a tree decomposition of $G$ of width at most $w$.  We may assume that for any edge $tt'$ of $T$, $X_t \nsubseteq X_{t'}$ and $X_{t'} \nsubseteq X_{t}$.  Thus, for any vertex $t \in V(T)$, if we let $T_1, \dots, T_k$ be the components of $T-t$ then we may choose vertices $x_i \in \bigcup_{t' \in V(T_i)} X_{t'}$ such that $x_i \neq x_j$ for all $i \neq j$ and $x_i$ has a neighbor in $X_t$. Since $X_t$ has at most $w+1$ vertices and each vertex in $X_t$ has degree at most $d$, we see that $k \le d(w+1)$.  Thus, the tree $T$ has maximum degree $d(w+1)$.  

For every vertex $v \in V(G)$, we arbitrarily fix a vertex $t(v) \in V(T)$ such that $v \in X_{t(v)}$.  Let $X'_t = \{v \in V(G): t(v) = t\}$ for all vertices $t \in V(T)$ and let $\zX' = \{X_t': t \in V(T)\}$.  By construction, $X'_t \subseteq X_t$ for all $t \in V(T)$.  We will see that the tree-cut decomposition $(T, \zX')$ has width at most $(2w+2)d$.  We first observe that since $T$ has degree at most $d(w+1)$ and $|X_t'| \le w+1$, then the size of each torso is at most $(d+1)(w+1)\le (2w+2)d$.  Thus, it only remains to show that $(T, \zX')$ has adhesion at most $(2w+2)d$.

Fix an edge $t_1t_2$ of $T$, and let $T_i$ be the subtree of $T - t_1t_2$ containing $t_i$ for $i = 1, 2$.  Let $Z_i:= \bigcup_{t \in V(T_i)} X_t'$.  We want to bound the number of edges of $G$ with one end in $Z_1$ and one end in $Z_2$.  Let $z_1z_2$ be such an edge with $z_i \in Z_i$ for $i = 1, 2$.  The edge $z_1z_2$ must be contained in some bag of the tree decomposition, either in $T_1$ or in $T_2$.  It follows by the properties of a tree decomposition that either $z_1 \in X_{t_2}$ or $z_2 \in X_{t_1}$.  Thus, if we consider the bipartite subgraph of $G$ of edges with one endpoint in $Z_1$ and one endpoint in $Z_2$, we see that there does not exist a matching of size $2w+3$.  Thus, there exists a set of $2w+2$ vertices hitting all such edges and by the bound on the degree of $G$, we see that there are at most $(2w+2)d$ edges with one end in $Z_1$ and one end in $Z_2$.  Thus the adhesion of $(T, \zX')$ is at most $(2w+2)d$, completing the proof.  
\end{proof}

\section{Walls and a lower bound on the tree-cut width}

A classic theorem of the theory of minors relates the tree-width of a graph $G$ to the largest value $k$ such that $G$ contains the $k \times k$-grid as a minor.  The $k \times k$-grid has tree-width $k$.  Thus, any graph which contains the $k \times k$-grid as a minor has tree-width at least $k$.  Robertson and Seymour \cite{RS5} show that the converse is approximately true: there exists a function $w$ such that the tree-width of $G$ at least $w(k)$ contains the $k \times k$-grid as a minor.  In the next two sections, we will see that a similar result holds for the tree-cut width.  We establish the lower bound in this section and prove the upper bound in the following section.   

A \emph{wall} is a graph similar to a grid with maximum degree three.  For positive integers~$r$, define the \emph{$r$-wall} $H_r$ as follows.  Let $P_1, \dots, P_r$ 
be $r$ vertex-disjoint paths of length $r-1$.  Say for $1 \le i \le r$ that $V(P_i) = \{v_1^i\dots
v_r^i\}$ with $v_j^i$ adjacent fo $v_{j+1}^i$ for $1 \le j \le r-1$. Let $V(H_r) = \bigcup_{i=1}^r V(P_i)$, and let
\begin{equation*}
\begin{split}
E(H_r) = \bigcup_{i=1}^r E(P_i) \cup
\Big\{&v_j^i v_j^{i+1} \mid \text{ $i,j$ odd};\ 1 \le i < r;\ 1 \le j \le r\Big\} \\
& \cup \Big\{v_j^i v_j^{i+1} \mid \text{ $i,j$ even};\ 1 \le i < r;\ 1 \le j \le r\Big\}.
\end{split}
\end{equation*}
We call the paths $P_i$ the \emph{horizontal paths} of $H_r$; the paths induced by the vertices $\{v^i_j,v^i_{j+1}:1\leq i\leq r\}$ for an odd index $j$ are its \emph{vertical paths}.  Note that the graph $H_r$ has $r^2$ vertices.    See Figure~\ref{fig:h6}.
\begin{figure}[ht]
\centering
\noindent
\includegraphics[scale = .5]{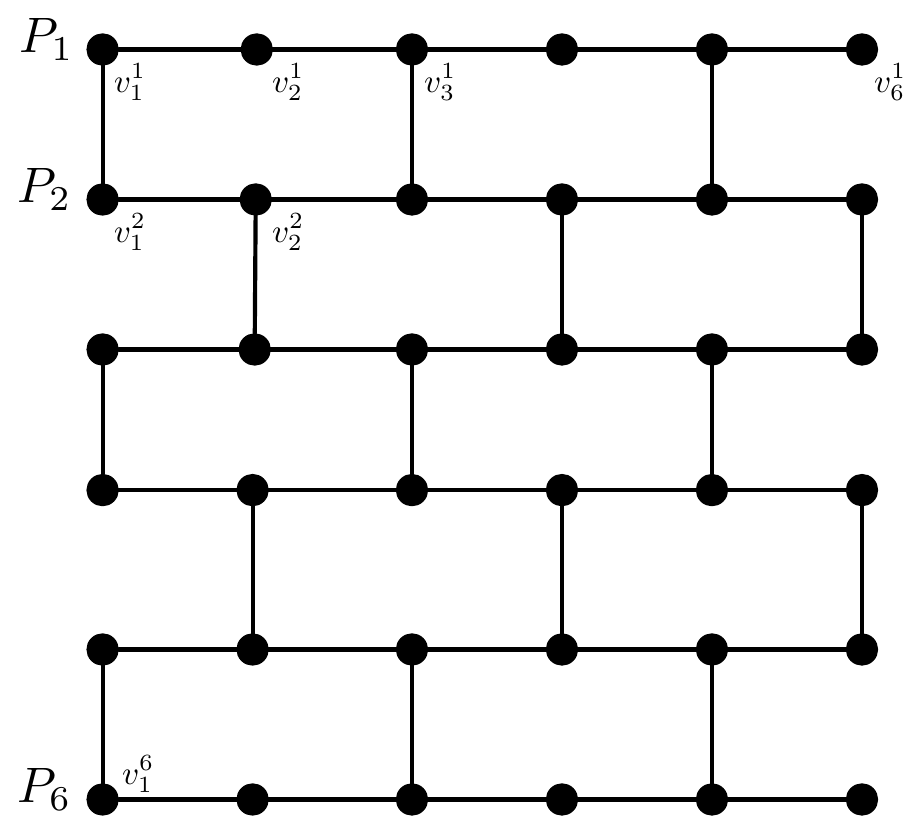}
\caption{The $6$-wall $H_6$}\label{fig:h6}
\end{figure}

It is an easy exercise to show that every graph which contains the $r \times r$ grid as a minor contains $H_r$ as a subdivision.  

We will see that every graph which admits a large wall as an immersion must have big tree-cut width.  In preparation, we prove two easy lemmas.
\begin{LE}\label{lem:tree1}
Let $T$ be a tree with $|V(T)| \ge 2$ and $X \subseteq V(T)$.  Let $k, r\ge 1$ be integers.  If $|X| \ge kr$, then one of the following must hold:
\begin{enumerate}
\item there exists a vertex $v$ such that at least $k$ components of $T-v$ contain a vertex of $X$, or
\item there exists an edge $f$ such that each component of $T-f$ contains at least $r$ vertices of $X$.
\end{enumerate}
\end{LE}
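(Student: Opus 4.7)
My plan is a contrapositive argument: assume condition~2 fails and deduce condition~1. The case $k=1$ is immediate, since $|X|\ge r\ge 1$ and for any $x\in X$ and any neighbor $v$ of $x$ (which exists as $|V(T)|\ge 2$), the component of $T-v$ containing $x$ witnesses condition~1. So I henceforth assume $k\ge 2$.

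Under the failure of condition~2, every edge $f\in E(T)$ has one component of $T-f$ containing fewer than $r$ vertices of $X$; call this the \emph{light} side of $f$ and the other the \emph{heavy} side. The labeling is unambiguous because $|X|\ge kr\ge 2r$ forces the heavy side to contain more than $(k-1)r\ge r$ vertices of $X$. Orient every edge of $T$ from its light side to its heavy side. The key step is then to locate a \emph{sink} of this orientation, i.e., a vertex $v$ whose incident edges all point toward it. I will argue its existence by the standard acyclicity argument: starting from any vertex and iteratively following outgoing edges produces a walk that cannot revisit a vertex (such a revisit would force some tree edge to be oriented in two directions), so the walk is finite and must terminate at a sink. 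By construction, every component $C$ of $T-v$ is the light side of the edge joining it to $v$, hence $|X\cap V(C)|\le r-1$.

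Letting $m$ be the number of components of $T-v$ that meet $X$, I obtain
\[
 kr \le |X| \le |X\cap\{v\}| + m(r-1) \le 1 + m(r-1).
\]
For $r\ge 2$ this rearranges to $m\ge (kr-1)/(r-1) = k+(k-1)/(r-1) \ge k$, so $v$ witnesses condition~1. For $r=1$ the same inequality forces $k\le 1$, contradicting $k\ge 2$, so in that case condition~2 must in fact hold. I expect the only mildly delicate point in the plan to be the sink existence argument in the oriented tree; beyond that, everything reduces to a one-line arithmetic count.
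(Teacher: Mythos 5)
Your proof is correct and follows essentially the same route as the paper's: dispose of $k=1$ trivially, orient each edge of $T$ toward its heavy side, locate a sink $v$, and count the vertices of $X$ spread over the components of $T-v$. You are somewhat more explicit than the paper about the final arithmetic and the $r=1$ edge case, but the argument is the same.
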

\begin{proof}
Observe that if $k=1$, the statement is trivially true.  Fix a vertex $x \in X$ and arbitrarily pick a vertex $v \in V(T)$, $v \neq x$.  Such a vertex $v$ satisfies 1.  We may assume therefore that $k \ge 2$.  For each edge $e \in E(T)$, at least one component of $T-e$ must contain at least $r$ vertices of $X$.  If both do, the statement is proven.  Thus, we may assume that there is a unique component $C_e$ of $T-e$ which has at least $r$ vertices of $X$ for every $e \in E(T)$.  We orient the edge $e$ towards the vertex which is contained in $V(C_e)$ for every edge $e \in E(T)$.    It follows that there must exist a vertex $v \in V(T)$ which has out-degree 0.  By the size of $X$, $T-v$ must have at least $k$ distinct components each of which contains a vertex of $X$, completing the proof.
\end{proof}

The next lemma is a re-statement of a lemma of \cite{FT} in terms of edge cuts and walls.  We include the proof for completeness.  
\begin{LE}\label{lem:wall1}
Let $k \ge 1, r \ge 2k+1$, be positive integers.  Let $A_1, A_2 \subseteq H_r$ be two subsets of vertices such that for every $v \in A_1 \cup A_2$, $deg(v) = 3$.  Assume $|A_1| = |A_2| = 2k^2$.  Then there does not exist a cut $U \subseteq V(H_r)$ with $A_1 \subseteq U$, $A_2 \subseteq V(H_r) \setminus U$, and $|\delta(U)| < k$.  
\end{LE}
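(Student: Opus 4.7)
I would argue by contradiction. Suppose a cut $U$ exists with $A_1\subseteq U$, $A_2\subseteq V(H_r)\setminus U$, and $|\delta(U)|\le k-1$; my goal is to exhibit $k$ distinct edges in $\delta(U)$, directly contradicting this bound.

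Let $P_1,\dots,P_r$ be the horizontal paths of $H_r$. Call a row $P_i$ \emph{clean} if $E(P_i)\cap\delta(U)=\emptyset$, and \emph{split} otherwise. Since every horizontal edge of $\delta(U)$ lies inside a unique row, at most $k-1$ rows can be split, so at least $r-(k-1)\ge k+2$ rows are clean; a clean row must lie entirely in $U$ or entirely in $V(H_r)\setminus U$. I would next argue that some clean row lies in $U$ and some in $V(H_r)\setminus U$. Otherwise, say by symmetry all clean rows lie in $U$; then all $2k^2$ vertices of $A_2$ would be confined to the $\le k-1$ split rows, which together hold at most $(k-1)\cdot r = (k-1)(2k+1) = 2k^2 - k - 1 < 2k^2$ vertices --- a contradiction.

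To complete the argument I would use the vertical paths of the wall. For each odd $j\in\{1,3,\dots,2k-1\}$, let $V_j$ be the vertical path induced on $\{v^i_j,v^i_{j+1}:1\le i\le r\}$. Each $V_j$ visits every row of $H_r$ (one sees this by unpacking the zigzag definition), and the $k$ paths $V_1,V_3,\dots,V_{2k-1}$ use pairwise disjoint pairs of columns, so are vertex-disjoint and hence edge-disjoint. Fix clean rows $P_i\subseteq U$ and $P_{i'}\subseteq V(H_r)\setminus U$ from the previous paragraph. Then for every $j$, the path $V_j$ contains the vertex $v_j^i\in U$ and the vertex $v_j^{i'}\in V(H_r)\setminus U$, so $V_j$ must include at least one edge of $\delta(U)$. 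Edge-disjointness of the $V_j$'s then yields $k$ distinct edges of $\delta(U)$, contradicting $|\delta(U)|\le k-1$.

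The main obstacle is the pigeonhole step showing that clean rows live on both sides of the cut: this is the only point where the specific hypotheses $|A_1|=|A_2|=2k^2$ and $r\ge 2k+1$ are used, and the bound $(k-1)(2k+1) = 2k^2 - k - 1$ is tight enough that both of them are genuinely needed. Once clean witnesses are in place on both sides, the wall's built-in vertex-disjoint vertical paths supply the $k$ required cut-crossing edges with essentially no further work.
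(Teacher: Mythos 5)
Your overall strategy (find $k$ edge-disjoint paths each forced to cross the cut) is sound and differs from the paper's connectivity argument, but there is a genuine gap in the pigeonhole step, and it is exactly the step you flagged as the crux. You write that the at most $k-1$ split rows ``together hold at most $(k-1)\cdot r = (k-1)(2k+1) = 2k^2-k-1 < 2k^2$ vertices,'' silently substituting $r=2k+1$. The hypothesis is $r\ge 2k+1$, and your bound degrades as $r$ grows: already for $r \ge 2k+2+\frac{2}{k-1}$ one has $(k-1)r\ge 2k^2$, so for large walls it is numerically entirely possible for all $2k^2$ vertices of $A_2$ to sit inside $k-1$ split rows, and no contradiction arises. (Note the lemma for large $r$ does not reduce to the case $r=2k+1$, since $A_1,A_2$ need not lie in a subwall.) So as written the proof only covers $r$ very close to $2k+1$.

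The repair needs the two-dimensional count that the paper uses. Run the same ``clean/split'' dichotomy on the vertical paths $V_1,V_3,\dots$: at most $k-1$ of them are split, and every clean vertical path meets every clean row (in the two vertices $v^i_j,v^i_{j+1}$), so if all clean rows lay in $U$ then all clean vertical paths would lie in $U$ as well, and $A_2$ would be confined to the pairwise intersections of at most $k-1$ split rows with at most $k-1$ split vertical paths, i.e.\ at most $2(k-1)^2 < 2k^2$ vertices --- a contradiction valid for all $r\ge 2k+1$. (One must also check that every degree-$3$ vertex of the wall lies on a vertical path, which is where the degree hypothesis on $A_1\cup A_2$ earns its keep.) With that replacement your final step --- the $k$ vertex-disjoint vertical paths $V_1,\dots,V_{2k-1}$ each containing a vertex of a clean row in $U$ and a vertex of a clean row outside $U$, hence each contributing a distinct edge to $\delta(U)$ --- is correct and gives a clean alternative to the paper's argument, which instead exhibits a single connected subgraph $Q_h\cup Q_v\cup P_1\cup P_2$ avoiding the cut and joining $A_1$ to $A_2$.
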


\begin{proof}
We may assume $A_1 \cap A_2 = \emptyset$.  Since each horizontal path of the wall intersects each vertical path in at most two vertices of degree three, we see that there are either $k$ distinct horizontal paths or $k$ distinct vertical paths each of which contains a vertex of $A_i$ for $i = 1, 2$.   It is now easy to see that there cannot exist a set $X \subseteq E(H_r)$ with $|X| < k$ intersecting every $A_1-A_2$ path.  Assume such an $X$ exists.  For $i = 1, 2$, there is a path $P_i$ which is either a horizontal or vertical path of $H_r$ which is simultaneously disjoint from $X$ and contains an element of $A_i$.    As $r \ge 2k+1$, there exist at least $k$ distinct horizontal paths in $H_r$ and at least $k$ distinct vertical paths.  Thus, there exists a horizontal path $Q_h$ and a vertical path $Q_v$ of $H_r$ which are disjoint from $X$.  As $Q_v \cup Q_h$ intersects every vertical path and every horizontal path, we see that $Q_v \cup Q_h \cup P_1 \cup P_2$ is a connected subgraph, and that there exists an $A_1-A_2$ path avoiding $X$, a contradiction.  
\end{proof}

We now give the main result of this section and show that if a graph has bounded tree-cut width, then it does not admit an immersion of the $r$-wall for arbitrarily large $r$.  
\begin{theorem}\label{thm:excimersiongrid}
Let $G$ be a graph and $r\ge 3$ a positive integer.  If $G$ contains an immersion of $H_{2r^{2}}$, then $G$ has tree-cut width at least $r$.
\end{theorem}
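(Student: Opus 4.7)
By Lemma \ref{lem:bdedimwidth}, tree-cut width is monotone under immersion, so it suffices to prove $\text{tcw}(H_{2r^2}) \ge r$. Suppose for contradiction that $(T, \zX)$ is a tree-cut decomposition of $G = H_{2r^2}$ of width strictly less than $r$, and let $D$ denote the set of degree-$3$ vertices of $G$; a direct count from the wall's definition gives $|D| \ge (2r^2-2)^2$. The width hypothesis says the adhesion is at most $r-1$ and every torso $3$-center has fewer than $r$ vertices, which in particular forces $|X_t| < r$ for every $t \in V(T)$.

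The first step is to orient each edge $e = tt' \in E(T)$ toward the side of the induced cut containing more vertices of $D$: by Lemma \ref{lem:wall1} (with $k = r$), the smaller side must contain fewer than $2r^2$ vertices of $D$, since otherwise the adhesion would be at least $r$. This orientation has a unique sink $t_0 \in V(T)$, and each component of $T - t_0$ corresponds to a set $Z_i \subseteq V(G)$ with $|Z_i \cap D| < 2r^2$. In particular, $\sum_i |Z_i \cap D| = |D| - |X_{t_0} \cap D| \ge |D| - r$.

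Next I would partition the branches of $t_0$ into $B_{\ge 18} = \{i : |Z_i \cap D| \ge 18\}$ and $B_{\text{small}} = \{i : 1 \le |Z_i \cap D| \le 17\}$. By Lemma \ref{lem:wall1} with $k = 3$, every $i \in B_{\ge 18}$ has $|\delta_G(Z_i)| \ge 3$, so the corresponding peripheral $z_i$ contributes a vertex to the $3$-center $\bar H_{t_0}$; the width bound $|V(\bar H_{t_0})| < r$ therefore forces $|B_{\ge 18}| \le r - 1$ and more generally caps the total number of surviving peripherals by $r - 1$. A branch in $B_{\text{small}}$ is either a surviving peripheral (at most $r-1$ of those) or else a set $Z_i$ with $|\delta_G(Z_i)| \le 2$ and $Z_i \cap D \ne \emptyset$. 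The crucial structural input is that the number of pairwise disjoint such $Z$'s in $H_{2r^2}$ is bounded by a small absolute constant: any such $Z$ must be anchored at one of the at most two degree-$1$ corner vertices of the wall, because cutting off a connected region that contains a degree-$3$ vertex using only two edges forces a pendant path from such a corner. This yields $|B_{\text{small}}| = O(r)$, and then summing gives $\sum_i |Z_i \cap D| \le 2r^2 \cdot |B_{\ge 18}| + 17 \cdot |B_{\text{small}}| = O(r^3)$, which contradicts $|D| = \Omega(r^4)$ for $r \ge 3$.

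The main obstacle is therefore the structural claim about $2$-edge cuts in $H_{2r^2}$: one must show from the explicit definition of the wall that any small-cut region containing a degree-$3$ vertex must border one of the pendant corners. I expect this to come down to a short isoperimetric-style case analysis, exploiting that the wall is almost $3$-edge-connected away from its two degree-$1$ corners; a secondary technical point is checking that no initially-surviving peripheral is lost to cascade suppression from degree-$1$ peripherals, but the same structural bound absorbs this into an additive constant. Once this is in hand, the rest of the proof is essentially arithmetic in $r$.
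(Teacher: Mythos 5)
Your overall strategy is the right one -- reduce via Lemma \ref{lem:bdedimwidth}, use Lemma \ref{lem:wall1} to orient the tree edges and locate a ``sink'' bag (this is exactly what Lemma \ref{lem:tree1} packages), and then derive a contradiction by counting at that bag. But the argument has a genuine, load-bearing hole exactly where you flag it: the ``structural claim'' about connected regions of $H_{2r^2}$ with edge-boundary at most $2$ is never proved, and as phrased it is not even the right statement. What you actually need is that \emph{no} edge cut of order at most $2$ in the wall can separate one degree-$3$ vertex from another; the ``anchored at a degree-$1$ corner'' formulation conflates this with a description of which sets with small boundary exist at all (those consisting entirely of degree-$\le 2$ boundary vertices). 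Without this fact you cannot conclude $|B_{\mathrm{small}}|=O(1)$, nor can you dispose of the ``cascade suppression'' worry, since degree-$1$ and degree-$2$ peripherals arising from branches containing only boundary vertices of the wall could in principle erode the degrees of the peripherals you are counting. Everything you call ``essentially arithmetic'' sits on top of this unproved step.

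The missing idea that closes the hole cleanly -- and is how the paper proceeds -- is to first suppress all vertices of degree at most two in $H_{2r^2}$, obtaining a $3$-regular, $3$-edge-connected graph $\bar H$ with at least $2r^4$ vertices that is still immersed in $G$. Applying Lemma \ref{lem:bdedimwidth} to $\bar H$ instead of to the raw wall kills all of your difficulties at once: $3$-edge-connectivity means every torso equals its own $3$-center (so no peripheral is ever suppressed and there is no cascade), and there are no cuts of order at most $2$ at all, so your class $B_{\mathrm{small}}$ is empty by fiat. With that substitution your sink argument goes through (the paper instead invokes Lemma \ref{lem:tree1} and handles the ``balanced edge'' case directly by Lemma \ref{lem:wall1}, which avoids your two-tier counting of branches entirely). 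So either prove the $3$-edge-connectivity of the suppressed wall (or, equivalently, your cut claim) explicitly, or restructure the proof around $\bar H$; as written, the proof is incomplete.
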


\begin{proof}
Let $\bar{H}$ be the graph obtained from $H_{2r^{2}}$ by repeatedly suppressing all vertices of degree less than or equal to two.  Note by our choice of $r$ that $\bar{H}$ is 3-edge connected and 3-regular and $V(\bar{H}) \subseteq V(H_{2r^{2}})$.  It is easy to see that $|V(\bar{H})| \ge 2r^4$.

By Lemma \ref{lem:bdedimwidth}, it suffices to show that the graph $\bar{H}$ has tree-cut width at least $r$.  Assume, to reach a contradiction, that $(T, \zX)$ is a tree-cut decomposition of $\bar{H}$ of width at most $r-1$.  Let $Z \subseteq V(T)$ be the set of vertices $t \in V(T)$ whose corresponding $X_t$ is non-empty.  Note, as $|X_t| \le r-1$ for all $t \in V(T)$, $|Z| \ge 2r^3$.  By Lemma \ref{lem:tree1} there either exists a vertex $v \in V(T)$ such that at least $r$ components of $T-v$ contain at least one vertex of $Z$ or alternatively, there exists an edge $e$ such that each component of $T-e$ contains at least $2r^2$ vertices of $Z$.  In the first case, look at the torso $J$ of $(T, \zX)$ at the vertex $v$.  By the choice of $v$, $J$ has at least $r$ peripheral vertices.  As $\bar{H}$ is 3-edge connected, we see that the 3-center of $(J, X_v)$ is $J$, and consequently has at least $r+1$ vertices, a contradiction to the bound on the tree-cut width.  Thus, we may assume that there exists an edge $e$ such that each component of $T-e$ contains at least $2r^2$ vertices of $Z$.  This implies there exists a subset $U \subseteq V(\bar{H})$ such that both $U$ and $V(\bar{H}) - U$ each contain at least $2r^2$ vertices.  As the adhesion of the decomposition is at most $r-1$, it follows that $\delta(U) \le r-1$.  Thus, in $H_{2r^2}$ there similarly exists an edge cut of order at most $r-1$ separating two subsets $A_1, A_2$ of the vertices of degree 3 in $H_{2r^2}$, such that $|A_1| = |A_2| = 2r^2$, contrary to Lemma \ref{lem:wall1}.  This completes the proof of the theorem.
\end{proof}
The bound we obtain in Theorem \ref{thm:excimersiongrid} is almost certainly not best possible.  The correct value should be linear in $r$, as in the analogous statement about the tree-width of a grid.

\section{A grid theorem for weak immersions}

A classic theorem of the theory of minors says that a graph must either have bounded tree-width or contain a large grid minor.  Equivalently, if a graph does not contain a $k \times k$-grid minor, then the tree-width of the graph is bounded by a function of $k$.  In this section, we will prove a similar result for graphs which have bounded tree-cut width.  Our proof will use the grid minor theorem, although we will need a version based on excluded subdivisions instead of excluded minors.

\begin{theorem}[Grid minor theorem, \cite{RS5}]\label{thm:grid}
There exists a function $w= w(r)$ satisfying the following.  Let $G$ be a graph and let $r\ge1$ be a positive integer.  If the tree-width of $G$ is at least $w(r)$, then $G$ contains the $r$-wall as a subdivision.  
\end{theorem}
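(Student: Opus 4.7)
The plan is to derive this subdivision statement from the minor version of the Grid Minor Theorem of Robertson--Seymour, namely: there is a function $g$ such that every graph of tree-width at least $g(m)$ contains the $m \times m$-grid as a minor. I would invoke this as a black box with $m = r$. An inspection of the definition of $H_r$ given just above shows that $H_r$ is obtained from the $r \times r$-grid by deleting all vertical edges $v_j^i v_j^{i+1}$ whose indices $i, j$ have opposite parity; in particular $H_r$ is a subgraph of the $r \times r$-grid. Hence every graph that contains the $r \times r$-grid as a minor also contains $H_r$ as a minor.

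Next I would promote this $H_r$-minor to an $H_r$-subdivision using the classical fact that for any graph $J$ with $\Delta(J) \le 3$ the relations ``$J$ is a minor of $G$'' and ``$J$ is a topological minor of $G$'' coincide. Concretely, given a minor model $\{B_v : v \in V(J)\}$ in $G$ and, for each edge $f = uv \in E(J)$, an edge $e_f \in E(G)$ realizing $f$ with endpoints $b_u^f \in B_u$ and $b_v^f \in B_v$, observe that each $B_v$ is connected and contains at most $\deg_J(v) \le 3$ distinguished points $\{b_v^f : f \ni v\}$. Hence $B_v$ contains a minimal Steiner subtree $S_v$ meeting these points, which is a single vertex, a path, or a subdivided claw. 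Choose the branch vertex $\pi_v(v)$ to be the center of $S_v$ (or, in the degenerate cases, any vertex of $S_v$) and use the arms of $S_v$ concatenated with the edges $e_f$ as the composite paths. This yields an $H_r$-subdivision in $G$, and setting $w(r) := g(r)$ completes the argument.

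The only substantial obstacle is hidden inside the black box: the proof of the minor-form grid theorem in \cite{RS5} is long and passes through tangles or brambles of large order to extract the grid. The reduction sketched here, by contrast, is entirely elementary and uses only the two structural observations that walls sit inside grids and that walls have maximum degree three. An alternative route would be to re-run the tangle-based argument and extract a wall directly at the very last step, bypassing the minor-to-subdivision conversion; that approach is of comparable difficulty and not noticeably cleaner when \cite{RS5} is already being cited, so I would favour the two-step reduction above.
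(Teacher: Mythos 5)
Your proposal is correct and matches what the paper does: Theorem~\ref{thm:grid} is simply the Robertson--Seymour grid minor theorem quoted from \cite{RS5}, combined with the observation (stated in the paper as ``an easy exercise'' at the end of the previous section) that a graph containing the $r\times r$-grid as a minor contains the wall $H_r$ as a subdivision, since $H_r$ is a subgraph of the grid and has maximum degree three. Your write-up just makes that exercise explicit via the standard minor-to-topological-minor conversion for graphs of maximum degree at most three.
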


We now prove the analog of Theorem \ref{thm:grid} for the immersion of a large wall. The proof depends on Theorem \ref{thm:grid}.  

\begin{theorem}\label{thm:imgrid}
Let $G$ be a graph.  Let $r \ge 1$ be a positive integer.  Let $w= w(r) $ be the value given by Theorem \ref{thm:grid}.   If $G$ has tree-cut width at least $4r^{10} w(r)$, then $G$ admits a weak immersion the $r$-wall.
\end{theorem}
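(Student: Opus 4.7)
The plan is to prove this by induction on $|V(G)|$, combining the structure theorem for excluded clique immersions (Theorem~\ref{thm:structweak}) with the Grid Minor Theorem (Theorem~\ref{thm:grid}). Set $t = r^2$. If $G$ admits $K_{r^2}$ as a weak immersion, we are done: $H_r$ has exactly $r^2$ vertices of maximum degree $3$, so $H_r \subseteq K_{r^2}$ is a subgraph and hence a weak immersion. Otherwise, Theorem~\ref{thm:structweak} gives one of two cases: either (i) $G$ has $(r^2, r^4)$-bounded degree, or (ii) $G = G_1 \hat{\oplus}_k G_2$ is a grounded $k$-edge-sum with $k < r^4$ and $|V(G_i)| < |V(G)|$ for $i=1,2$.

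In case~(ii), the contrapositive of Lemma~\ref{lem:bdwidthsum} applied with $w = 4r^{10}w(r)-1$ --- noting that $k < r^4 \leq 4r^{10}w(r)-1$ --- forces some $G_i$ to have $tcw(G_i) \geq 4r^{10}w(r)$. The inductive hypothesis then yields an $H_r$ weak immersion in $G_i$, and Lemma~\ref{lem:sum} (using that the edge-sum is grounded) lifts it to $G$.

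In case~(i), let $Z \subseteq V(G)$ be the set of vertices of degree at least $r^4$; by hypothesis $|Z| \leq r^2$. Set $G' := G - Z$, so $\Delta(G') < r^4$. If $tw(G') \geq w(r)$, then Theorem~\ref{thm:grid} produces an $H_r$-subdivision in $G'$, which is a weak immersion of $H_r$ in $G$, and we are done. Otherwise $tw(G') < w(r)$, and Lemma~\ref{lem:bdedtw} gives a tree-cut decomposition $(T', \zX')$ of $G'$ of width at most $(2w(r)+2)r^4 \leq 4r^4 w(r)$.

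To reach a contradiction with $tcw(G) \geq 4r^{10}w(r)$, the final step extends $(T', \zX')$ to a tree-cut decomposition of $G$ of width strictly less than $4r^{10}w(r)$, by reinserting the at most $r^2$ vertices of $Z$. This is the main obstacle in the proof: naively placing $Z$ into a single bag fails because vertices of $Z$ may have arbitrarily large degree in $G$, and their incident edges would blow up the adhesion at many tree-edges. The resolution should distribute $Z$ across bags --- for instance, placing each $z \in Z$ in a bag of $(T', \zX')$ where a large fraction of $z$'s neighbors reside --- and use the combination of $|Z| \leq r^2$, the degree bound $r^4$ on $G'$, and the starting width bound $4r^4 w(r)$ from Lemma~\ref{lem:bdedtw} to ensure that each tree-edge's adhesion and each 3-center's size in the extended decomposition grow by at most $|Z| \cdot r^4 = r^6$. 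The target factorisation $4r^{10}w(r) = (4r^4 w(r)) \cdot r^6$ is consistent with this blow-up, and the technical heart of the proof lies in the careful accounting of this extension.
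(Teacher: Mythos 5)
Your reduction to the bounded-degree case is exactly the paper's: a minimal counterexample cannot immerse $K_{r^2}$, so Theorem~\ref{thm:structweak} either gives a grounded edge sum (handled by Lemma~\ref{lem:sum} and Lemma~\ref{lem:bdwidthsum}, as you say) or $(r^2,r^4)$-bounded degree, and then $G-Z$ has small tree-width by Theorem~\ref{thm:grid} and admits a tree-cut decomposition of width $O(r^4w(r))$ by Lemma~\ref{lem:bdedtw}. But the step you defer as ``the technical heart'' is the actual content of the proof, and the sketch you give for it does not work. You propose distributing the vertices of $Z$ among the existing bags and claim each tree-edge's adhesion grows by at most $|Z|\cdot r^4 = r^6$. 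That accounting is false: a vertex $z\in Z$ has degree bounded below by $r^4$ but \emph{not above} --- its degree is unbounded in terms of $r$ and $w(r)$ --- and wherever you place $z$, all of its edges to neighbours on the far side of a given tree-edge cross that cut. If $z$'s bag sits at a leaf of the tree, the single tree-edge at that leaf picks up essentially $\deg(z)$ crossing edges. No placement of $z$ inside the decomposition of $G-Z$ controls this.

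The paper's resolution is the opposite of what you dismiss as ``naive'': it \emph{does} put all of $Z$ into one bag, made into a hub adjacent to a separate tree-cut decomposition of each component $H_i$ of $G-Z$, and it makes this work via two claims you never formulate. First (Claim~\ref{cl:2}), for each component $H_i$ the number of edges between $Z$ and $H_i$ is at most $3r^{10}w(r)$; the proof takes a width-$w(r)$ tree decomposition of $H_i$ and shows that either a bounded hitting set controls all ``triconnected'' attachment sets, or one finds $r^4$ disjoint such sets, which yields an immersion of $S_{3,r^2}$ and hence of the $r$-wall via Observation~\ref{ob:multistar}. This bounds the extra adhesion contributed by $Z$ uniformly over all tree-edges. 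Second (Claim~\ref{cl:4}), at most $3r^8$ components have three or more edges to $Z$ --- again by threatening an $S_{3,r^2}$ immersion --- so in the torso at the hub all but $3r^8$ of the (possibly enormously many) peripheral vertices have degree at most two and are suppressed by the $3$-center, keeping the width contribution of the hub to $r^2+3r^8$. Without these two immersion-theoretic counting arguments, and without exploiting the $3$-center to absorb the unbounded number of components, the decomposition cannot be completed; so the proposal has a genuine gap precisely where the theorem's difficulty lies.
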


\begin{proof}
The theorem trivially holds for $r = 1, 2$, so we may assume that $r \ge 3$.
Assume the theorem is false and let $G$ be a graph which has tree-cut width at least $4r^{10} w(r)$ and does not admit a weak immersion of the $r$-wall.  Moreover, assume $G$ is chosen from all such counterexamples to minimize $|V(G)|$.  

\begin{claim}
$G$ has $(r^2,r^4)$-bounded degree.   
\end{claim}
\begin{cproof}
Assume otherwise.  The graph $G$ cannot admit a weak immersion of $K_{r^2}$, lest $G$ admit an immersion of the $r$-wall as well.  Thus, by Theorem \ref{thm:structweak}, $G$ is a grounded $k$-edge sum of two graph $G_1$ and $G_2$ for some $k \le r^2$.  If either $G_1$ or $G_2$ admitted the $r$-wall as a weak immersion, then by Lemma \ref{lem:sum}, $G$ would as well.  Thus, by the minimality of $G$, both $G_1$ and $G_2$ have tree-cut width strictly less than $4r^{10} w(r)$.  But now by Lemma \ref{lem:bdwidthsum}, $G$ has tree-cut width strictly less than $4r^{10} w(r)$, a contradiction.  
\end{cproof}

Let $Z$ be the set of vertices of degree at least $r^4$.  Let $n \ge 1$ be a positive integer, and let $H_1, \dots, H_n$ be the connected components of $G-Z$.  Then for all $i$, $H_i$ has maximum degree at most $r^4-1$.  By Theorem \ref{thm:grid}, the subgraph $H_i$ has tree-width at most $w(r)$ for all $1 \le i \le n$.

We now see that there are a bounded number of edges with one end in $H_i$ and the other end in $Z$ for all indices $i$.

\begin{claim}\label{cl:2}
For all $i \le n$, there exist at most $ 3r^{10}w(r)$ edges with one end in $Z$ and one end in $V(H_i)$.
\end{claim}

\begin{cproof}
Fix an index $i \le n$.  Given a vertex $z \in Z$ and a subset $Y \subseteq V(H_i)$, we say $z$ is \emph{triconnected} to $Y$ if there exist three distinct edges with one endpoint equal to $z$ and other endpoint contained in $Y$. Let 
\begin{equation*}
\zY = \{Y \subseteq V(H_i): H_i[Y]\text{ is connected and }\exists~z \in Z \text{ such that $z$ is triconnected to $Y$}\}.
\end{equation*}
Note that for each $Y \in \zY$, there exist vertices $y \in Y$ and $z \in Z$ and paths $P_1, P_2, P_3$ contained in $G[Y \cup Z]$ such that $P_1, P_2, P_3$ have $y$ and $z$ as common endpoints, no internal vertex in $Z$, and are pairwise edge-disjoint.  

Fix a tree decomposition $(T, \zX)$ of $H_i$ of width at most $w(r)$.  For every $Y \in \zY$, let $T(Y)$ be the subgraph of $T$ induced by the vertex set $\{t \in V(T): X_t \cap Y \neq \emptyset\}$.  By the fact that $H_i[Y]$ is connected, it follows that $T(Y)$ is a subtree of $T$ for all $Y \in \zY$.  It is a standard exercise to show that either there exist $m=r^4$ distinct elements $Y_1, \dots, Y_m$ of $\zY$ such that the trees $T(Y_j)$ and $T(Y_{j'})$ are vertex-disjoint for $j \neq j'$, or alternatively, there exists a set of at most $r^4$ vertices of $T$ intersecting $T(Y)$ for all $Y \in \zY$.  

Assume, as a case, that there exist such distinct $Y_1, \dots, Y_m$ whose corresponding subtrees of $T(Y_j)$ of $T$ are pairwise disjoint.  For each $j$, $1 \le j \le m$, there exists a vertex $y_j \in Y_j$ and three edge-disjoint paths $P_1^j, P_2^j, P_3^j$ contained in $G[Y_j \cup Z]$ such that $P_1^j$, $P_2^j$, and $P_3^j$ have a common endpoint in $Z$, another common endpoint equal to $y_j$, and no internal vertex in $Z$.  Thus, given the bound on $|Z|$, there exists a vertex $z \in Z$ such that $r^2$ of the indices $j$, $1 \le j \le m$, have their corresponding paths $P_1^j$ terminating at the same vertex $z\in Z$.  By construction, the paths $P_{j'}^j$ and $P_{l'}^l$ are edge-disjoint for $1 \le j', l' \le 3$, $1 \le j < l \le m$.  We see that $G$ thus admits an immersion of $S_{3, r^2}$, and consequently by Observation \ref{ob:multistar}, admits the $r$-wall as an immersion as well.

We conclude that there exist at most $r^4$ vertices of $T$ intersecting $T(Y)$ for all $Y \in \zY$.  Thus, there exists a subset $Z' \subseteq V(H_i)$ of size at most $r^4(w(r)+1)$ intersecting $Y$ for all $Y \in \zY$.  Note that by the bound on the maximum degree of $H_i$, there are at most $(r^4-1)(r^4(w(r)+1)) \le r^8w(r)$ components of $H_i - Z'$. Every vertex $z \in Z$ has at most two incident edges with an end in each component of $H_i - Z'$.  Thus, there are at most $2r^{8}w(r)(r^2) = 2r^{10}w(r)$ edges with one endpoint in $V(H_i) - Z'$ and the other end in $Z$.  As every vertex in $H_i$ has degree (in $G$) at most $r^4-1$, there are at most $(r^4-1)(r^4(w(r)+1)) \le r^8w(r)$ edges with one end in $Z'$ and the other end in $Z$.  Thus, there are a total of at most $r^8w(r) + 2r^{10}w(r) \le 3r^{10}w(r)$ edges with one end in $V(H_i)$ and the other end in $Z$, as desired.
\end{cproof}

\begin{claim}\label{cl:4}
There are at most $3r^{8}$ distinct indices $i$ such that there are at least 3 edges with one end in $H_i$ and one end in $Z$.
\end{claim}
\begin{cproof}
Assume, to reach a contradiction, that there exists a set $I$ of size at least $3r^{8}$ such for all $i \in I$, there are at least 3 edges with one end in $H_i$ and one end in $Z$. For each $i \in I$, there exists a vertex $v_i \in V(H_i)$ and 3 edge-disjoint paths in $G[V(H_i) \cup Z]$, each with one end equal to $v_i$, one end in $Z$, and no internal vertex in $Z$.  The endpoints of these paths form a multi-set of the elements of $Z$ of size 3.  As there are at most $(r^2)^3$ such possible multi-sets, it follows that there exists a multi-set $X \subseteq Z$, $|X| = 3$, and a subset  $I' \subseteq I$ with $|I'| \ge 3r^2$ such that for each $i \in I'$, the vertex $v_i$ has 3 pairwise edge-disjoint paths with endpoints in $Z$ equal to the three elements of $X$.  If we fix an element $x \in X$, we see that for each set of 3 distinct indices $I''$ in $I'$, we can fix a vertex $v_i$ for some $i \in I''$ and find 3 pairwise edge-disjoint paths from $v_i$ to the vertex $x$.  We conclude that $G$ contains an immersion of $S_{3, r^2}$, and consequently the $r$-wall as an immersion, a contradiction.  
\end{cproof}

We now construct the desired decomposition of $G$.  For each $i$, $1 \le i \le n$, $H_i$ has a tree-cut decomposition $(T_i, \zX_i)$ of adhesion at most $(2w(r)+2)r^4$ with $\zX_i = \{X^i_t: t \in V(T_i)\}$ such that every torso has size at most $(2w(r)+2)r^4$ by Lemma \ref{lem:bdedtw}.  Arbitrarily fix a vertex $v_i \in V(T_i)$ for each $1 \le i \le n$, and now define the tree $T$ as the union of the trees $T_i$ along with a vertex, call it $v$, adjacent to $v_i$ for all $1 \le i \le n$.  We define $\zX = \{X_t: t \in V(T)\}$ as follows.  Let $t \in V(T)$.  If $t \in V(T_i)$ for some index $i$, we let $X_t = X^i_t$.  Otherwise, $t = v$ and we let $X_t = Z$.  We claim $(T, \zX)$ is the desired decomposition of $G$.

First, we see that the adhesion of $(T, \zX)$ is bounded. For every edge $e \in E(T)$, either $e$ has one end equal to the vertex $v$, or $e \in E(T_i)$ for some index $i$.  By Claim \ref{cl:2}, if $e$ has one end equal to $v$, then there are at most $3r^{10}w(r)$ edges traversing the cut corresponding to $e$.  Alternatively, if $e$ is contained in $T_i$ for some index $i$, then the edges traversing the cut corresponding to $e$ are either contained in $H_i$, or alternatively, have one end in $H_i$ and one end in $Z$.  It follows that there are at most $(2w(r)+2)r^4 + 3r^{10}w(r) \le 4r^{10}w(r)$ edges traversing the cut corresponding to $e$.  Thus, $(T, \zX)$ has adhesion at most $4r^{10}w(r)$.

Consider a vertex $t \in V(T)$ such that $t \in V(T_i)$ for some index $i$.  By Lemma \ref{lem:bdedtw}, the torso of $(T_i, \zX_i)$ at $t$ has at most $(2w(r)+2)r^4$ vertices.  As $t$ has at most one more neighbor in $T$ than in $T_i$, it follows that the torso $J$ of $(T, \zX)$ at $t$ has at most $(2w(r)+2)r^4 + 1$ vertices.  Thus, we can bound size the 3-center of $(J, X_t)$ by $3r^4w(r)$ given that $w(r) \ge w(3) \ge 4$.  If we consider the vertex $v \in V(T)$, we see that the vertex set of the torso $J_v$ consists of the set $Z$ of vertices along with one peripheral vertex for each contracted $H_i$, call it $h_i$.  Note, by construction, no edge has endpoints in distinct $H_i$.  Thus, the edges of the torso at $v$ are either edges of $G[Z]$ or edges with one end in $Z$ and one end equal to $h_i$ for some index $i$.  By Claim \ref{cl:4}, there are at most $3r^{8}$ distinct indices $i$ such that there are at least 3 edges with one end in $H_i$ and one end in $Z$.  Thus, there are at most $3r^{8}$ distinct indices $i$ such the vertex $h_i$ has degree at least 3 in $J_v$.  It follows that the 3-center of $(J_v, Z)$ has at most $3r^{8}+ r^2 \le 4r^{10}w(r)$ vertices.  We conclude that the width of the tree-cut decomposition $(T, \zX)$ is at most $4r^{10}w(r)$, completing the proof of the theorem.
\end{proof}

It is unclear whether Theorem \ref{thm:imgrid} can be proven without the dependence on Theorem \ref{thm:grid} without essentially replicating the proof of the grid minor theorem.  

Reed and Wood \cite{RW} have shown that there exists a polynomial $f = f(r)$ such that every graph of tree-width at least $f(r)$ contains what they call a \emph{grid-like graph} of order $r$ as a subgraph.  While we will not need the exact definition here, the edges of a \emph{grid-like graph} can be partitioned into two sets $\zP$ and $\zQ$ of paths such that the elements of $\zP$ are pairwise vertex-disjoint and the elements of $\zQ$ are also pairwise vertex-disjoint.  Moreover, the grid-like graph of order $r$ has tree-width at least $\lceil r/2 \rceil -1$.  Thus, an immediate consequence of this result is that polynomial tree-width (in $r$) suffices to force the existence of a subgraph with maximum degree four of tree-width $r$.  Moreover, the proof is quite short and elegant.  

Given that an immersion of a large $r$-wall in a graph of max degree three must contain a subdivision of a large wall, we might consider whether there exists a short and relatively easy proof that every graph of sufficiently large tree-width (as a function of $r$) contains a subgraph with maximum degree three and tree width $r$.  By Lemma \ref{lem:bdedtw}, we know that graphs with bounded degree and large tree-width also have large tree-cut width.  Thus, if we could easily show that every graph of sufficiently large tree-width contains a subcubic graph of large tree-width, then any proof that sufficiently large tree-cut width implies the existence of a wall immersion would give an alternate proof of the grid minor theorem.

\pagebreak

\begin{center}
{\bf ACKNOWLEDGEMENTS}
\end{center}

The author gratefully acknowledges Paul Seymour's valuable contributions at the early stages of this project.  Theorem \ref{thm:imgrid} benefitted significantly from his input, and he had an important part in finding the right definition for the width of a tree-cut decomposition.   We are also grateful for his helpful feedback on several early drafts.  We thank as well the anonymous referees whose input greatly improved the article.


\begin{thebibliography}{99}
{\footnotesize

\bibitem{AL} F.~Abu-Khzam, M. Langston, Graph Coloring and the Immersion Order, {\it Lecture Notes in Computer Science} {\bf 2697} (2003) 394 -- 403.

\bibitem{CS} F.R.K. Chung, P.D. Seymour, Graphs with small bandwidth and cutwidth, {\it Discrete Math.} {\bf 75} (1:3) (1989)
113--119.


\bibitem{DeVos} M.~DeVos, J.~McDonald, B.~Mohar, D.~Scheide, A Note on Forbidding Clique Immersions. {\it Electr. J. Comb.} {\bf 20}(3): P55 (2013).


\bibitem{DDFMMS} M. DeVos, Z. Dvorak, J. Fox, J. McDonald, B. Mohar, D. Schiede, Minimum degree condition forcing complete graph immersion, {\it Combinatorica} {\bf 34}(3) (2014), 279--298.

\bibitem{DKMO} M. DeVos, K. Kawarabayashi, B. Mohar, and H. Okamura, Immersing small complete
graphs, {\it Ars Math. Contemp.} {\bf 3} (2010), 139--146.

\bibitem{D} Z. Dvorak, {\it personal communication}.

\bibitem{FL} M.R. Fellows, M.A. Langston, 
On well-partial-order theory and its application to combinatorial problems
of VLSI design, {\it SIAM J. Discrete Math.} {\bf 5} (1) (1992) 117--126.

\bibitem{FGTW} M. Ferrara, R. Gould, G. Tansey, T. Whalen, 
On $H$-immersions, 
{\it J. Graph Theory} {\bf 57} (2008) 245--254.

\bibitem{FT} J.~O.~Fr\"ohlich, T. M\"uller, 
Linear connectivity forces large complete bipartite minors: An alternative approach , 
{\it J. Combin. Theory, Ser. B} {\bf 101} (6) (2011) 502--508.

\bibitem{GJ} M.R. Garey, D.S. Johnson,
 Computers and Intractability. A Guide to the Theory of NP-Completeness, 
 Freeman, San Francisco, CA, 1979.
 
\bibitem{HLMP} P. Heggernes, D. Lokshtanov, R. Mihai, C. Papadopoulos, 
Cutwidth of Split Graphs and Threshold Graphs, {\it SIAM J. Discrete Math.} {\bf 25} (3) (2011) 1418--1437

\bibitem{KT} A.C. Giannopoulou, M. Kaminski, D. M. Thilikos, Forbidding Kuratowski graphs as immersions, {\it Journal of Graph Theory} in press (2014).

\bibitem{KK} K. Kawarabayashi, Y. Kobayashi, List-coloring graphs without subdivisions and without immersions,
 Proceedings of the Twenty-Third Annual ACM-SIAM Symposium on Discrete Algorithms
SIAM (SODA 12) (2012).
 
\bibitem{LM} F. Lescure, H. Meyniel, On a problem upon configurations contained in graphs with given chromatic number, Graph theory in memory of G. A. Dirac (Sandbjerg, 1985),  {\it Ann. Discrete Math.} {\bf 41} (1989) 325--331,.

\bibitem{N} C. St. J. A. Nash-Williams, On well-quasi-ordering trees, Theory of Graphs and Its Applications (Proc. Symp. Smolenice, 1963), Publ. House Czechoslovak Acad. Sci., 1964, 83--84.

\bibitem{RW} B. Reed, D. Wood, Polynomial tree-width forces a large grid-like-minor, {\it J. European Comb.} {33} (3) (2012) 374--379.

\bibitem{RS5} N. Robertson, P.D. Seymour, Graph minors V: Excluding a planar graph, {\it J. Combin. Theory, Ser. B} {\bf 41} (1) (1986) 92--114.

\bibitem{RS23} N. Robertson, P.D. Seymour, Graph minors XXIII: Nash-Williams immersion conjecture, {\it J. Combin. Theory, Ser. B} {\bf 100} (2) (2010) 181--205.


\bibitem{ST} P.D. Seymour, R. Thomas, Call routing and the ratcatcher, {\it Combinatorica} {\bf 14} (2) (1994) 217--241

\bibitem{TSB1} D.M. Thilikos, M. Serna, H.L. Bodlaender, Cutwidth I: A linear time fixed parameter algorithm,
{\it J. Algorithms} {\bf 56} (1) (2005) 1--24.

\bibitem{TSB2} D.M. Thilikos, M. Serna, H.L. Bodlaender, Cutwidth II: Algorithms for partial w-trees of bounded degree,
{\it J. Algorithms} {\bf 56} (1) (2005) 25--49.

\bibitem{Y} M. Yannakakis, A polynomial algorithm for the min-cut linear arrangement of trees, {\it J. ACM} {\bf 32} (4) (1985)
950--988.

}
\end{thebibliography}
\end{document}